\documentclass[preprint,12pt]{amsart}

\usepackage{amssymb}

\newtheorem{theorem}{Theorem}
\newtheorem{lemma}[theorem]{Lemma}
\newtheorem{remark}[theorem]{Remark}

\def\cbd{\hfill$\Box$}

\begin{document}

\title{Li-Yorke sensitive and weak mixing dynamical systems}

\author{Michaela Ml\'{\i}chov\'a}

\address{Mathematical Institute, Silesian University, 746 01 Opava, Czech Republic}
\date{\today}

\begin{abstract}

Akin and Kolyada in 2003 [E. Akin, S. Kolyada,  Li-Yorke sensitivity, Nonlinearity 16  (2003) 1421 -- 1433] introduced the notion of Li-Yorke sensitivity. They proved that every weak mixing system $(X, T)$, where $X$ is a compact metric space and $T$ a continuous map of $X$ is Li-Yorke sensitive.  An example of Li-Yorke sensitive system without weak mixing factors was given  in  [M. \v Ciklov\'a, Li-Yorke sensitive minimal maps, Nonlinearity 19 (2006) 517 -- 529]
(see also [M. \v Ciklov\'a-Ml\'{\i}chov\'a, Li-Yorke sensitive minimal maps II, Nonlinearity 22 (2009) 1569 -- 1573]). In their paper, Akin and Kolyada  conjectured that every minimal system with a weak mixing factor, is Li-Yorke sensitive. We provide arguments supporting this conjecture though the proof seems to be difficult.

\end{abstract}

\maketitle

\section{Introduction}

A {\it topological dynamical system} $(X, T)$ is a compact metric space $(X, \rho)$ endowed with a continuous surjective map $T: X \rightarrow X$. Denote by $T^n$ the $n$th iterate of $T$, $n\ge 0$. Points $x,y\in X$ are {\it proximal}, or  {\it $\delta$-asymptotic} (with $\delta \ge 0$), or {\it distal} if $\liminf _{n\to\infty}\rho (T^n(x),T^n(y))=0$, $\limsup _{n\to\infty} \rho (T^n(x), T^n(y))\le\delta$, or $\liminf _{n\to\infty} \rho (T^n(x), T^n(y))>0$, respectively; instead of $0$-asymptotic we say {\it asymptotic}. A map $T:X \rightarrow X$ is {\it Li-Yorke sensitive}, briefly $LYS$ or $LYS_\varepsilon$, if there is an $\varepsilon>0$ with the property  that every $x \in X$ is a limit of points $y \in X$ such that the pair $(x, y)$ is {\it proximal} but not {\it $\varepsilon$-asymptotic}, i.e.,  if
\begin{equation}
\label{e:lys}
\liminf_{n \to\infty} \rho(T^n(x), T^n(y))=0 ,\quad {\rm and}\quad \limsup_{n\to \infty} \rho(T^n(x), T^n(y))>\varepsilon.
\end{equation}
Every pair $(x, y) \in X \times X$  satisfying (\ref{e:lys})  is an {\it $\varepsilon$-Li-Yorke pair}. A  set $S\subseteq X$ such that any points  $x\ne y$ in $S$ satisfy (\ref{e:lys}) is an ($\varepsilon$-){\it scrambled set}. A map $T$ is {\it Li-Yorke chaotic}, briefly  $LYC$ or $LYC_\varepsilon$ if it has an uncountable $\varepsilon$-scrambled set, for some $\varepsilon >0$.

A system $(X, T)$ is {\it transitive} if for every pair of open, nonempty subsets $U, V \subset X$ there is a positive integer $n$ such that $U \cap T^{-n} (V) \neq \emptyset$; it is {\it weakly mixing} if the product system $(X \times X, T \times T)$ is transitive; it is {\it minimal} if every point $x \in X$ has a dense orbit $\{T^n(x)\}_{n=0}^\infty$. Finally, a system $(Y, S)$ is a {\it factor} of $(X, T)$ if there is a surjective continuous map $\pi: X \rightarrow Y$ such that $\pi \circ T=S\circ \pi$.  In this case we say that $(X, T)$ is an {\it extension} of $(Y, S)$. A {\it skew-product system} is a system $(X\times Y, F)$ where $X, Y$ are compact metric spaces, and $F$ a continuous map such that $F(x,y)=(f(x), g_x(y))$, for every $x\in X, y\in Y$. Other notions will be defined later, or can be found in \cite{A} or in related papers listed in references.
 
 The notion of $LYS$ was introduced and studied by Akin and Kolyada \cite{AK}. It turns out that such systems are related to weak mixing systems. For example, every nontrivial weak mixing system is $LYS$. Therefore, Akin and Kolyada stated in \cite{AK} five conjectures  concerning $LYS$ systems. Three of them were disproved in \cite{M1} and \cite{M2}. In particular, it was proved that a minimal $LYS$ system need not have a nontrivial weak mixing factor, and that a minimal system with a nontrivial $LYS$ factor need not be $LYS$. The remaining two open problems are the following: \\

\noindent {\bf P1.} Is every minimal system with a nontrivial weak mixing factor $LYS$?   \\
\noindent {\bf P2.} Does $LYS$ imply $LYC$?\\

 \noindent Both problems seem to be difficult but, in contrast to the preceding ones, it seems that the answer is in both cases positive. In this paper we give  partial solutions. Recall that the only known result related to (P1) is the following
                 
\begin{theorem}
\label{th1}(See \cite{AK}, \cite{B}.) If $(X,T)$ is minimal weak mixing and $(Y,S)$ minimal and distal then $(X\times Y, T\times S)$ is minimal and LYS.
\end{theorem}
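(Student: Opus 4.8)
The plan is to establish the two conclusions separately, since they draw on quite different facts. For minimality I would appeal to Furstenberg's disjointness theory: a minimal weakly mixing system is disjoint from every minimal distal system, a classical consequence of the structure theorem for distal flows. Granting this, let $M\subseteq X\times Y$ be any nonempty closed $(T\times S)$-invariant set and look at its coordinate projections. The projection $\pi_X\colon X\times Y\to X$ is a factor map, so $\pi_X(M)$ is a nonempty closed $T$-invariant subset of $X$ and hence equals $X$ by minimality of $(X,T)$; likewise $\pi_Y(M)=Y$. Thus $M$ projects onto both factors, and disjointness forces $M=X\times Y$. Since the only nonempty closed invariant set is the whole space, $(X\times Y,T\times S)$ is minimal. (If a self-contained argument were preferred, one could instead induct over the isometric tower presenting the distal factor $Y$, but invoking disjointness is the cleanest route.)

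For Li-Yorke sensitivity the key observation is that distality of $Y$ confines every proximal pair to a single $Y$-fiber. Writing $\rho$ for the product metric, which satisfies $\rho((x,y),(x',y'))\ge\rho_Y(y,y')$, any proximal pair $((x,y),(x',y'))$ obeys $\liminf_n\rho_Y(S^n y,S^n y')\le\liminf_n\rho((T\times S)^n(x,y),(T\times S)^n(x',y'))=0$, and distality of $(Y,S)$ then forces $y=y'$. Consequently, for a pair with common second coordinate the product metric along the orbit is exactly $\rho_X(T^n x,T^n x')$, so $((x,y),(x',y))$ is an $\varepsilon$-Li-Yorke pair in $X\times Y$ precisely when $(x,x')$ is one in $(X,T)$. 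Now, since $(X,T)$ is nontrivial weakly mixing, the Akin--Kolyada theorem \cite{AK} gives an $\varepsilon>0$ for which it is $LYS_\varepsilon$; given any $(x,y)$ I would choose $x_k\to x$ with each $(x,x_k)$ an $\varepsilon$-Li-Yorke pair in $X$, so that $(x_k,y)\to(x,y)$ and each $((x,y),(x_k,y))$ is an $\varepsilon$-Li-Yorke pair in the product. Hence $T\times S$ is $LYS_\varepsilon$ with the same constant as its factor.

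I expect the genuine difficulty to reside entirely in the minimality step. The sensitivity part is essentially bookkeeping once one notices the fiber restriction imposed by distality and then imports the constant from the weakly mixing factor. By contrast, minimality of $X\times Y$ is not formal: it rests on the nontrivial fact that minimal weakly mixing systems are disjoint from minimal distal ones, equivalently on the structure theory of distal flows, which is where the real work of Theorem~\ref{th1} is concentrated.
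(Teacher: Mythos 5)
The paper itself gives no proof of Theorem~\ref{th1}: it is recalled as a known result with citations to \cite{AK} and \cite{B}, so the comparison here is against the standard argument rather than an in-paper one. Your proposal is correct and is essentially that standard argument: minimality comes from Furstenberg's disjointness of minimal weakly mixing systems from minimal distal systems (equivalently, induction up the isometric tower of the distal system), and sensitivity comes from transferring $\varepsilon$-Li-Yorke pairs from the weakly mixing coordinate along a fixed second coordinate, which is exactly how \cite{AK} proceeds, using what appears in this paper as Lemma~\ref{lyp}. Two small observations. First, you correctly insert the word \emph{nontrivial} when invoking Akin--Kolyada; the theorem as stated silently assumes ${\rm diam}(X)>0$, since otherwise the product is a distal system and cannot be LYS. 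Second, your ``key observation'' that distality confines proximal pairs to a single $Y$-fiber, while true, is not actually used by your construction: for pairs $((x,y),(x_k,y))$ with identical second coordinate the $Y$-distance along the orbit is identically zero, so in the max-metric the product of an $LYS_\varepsilon$ system with \emph{any} system is $LYS_\varepsilon$, no distality needed. Your closing diagnosis is exactly right: the entire content of the theorem is the minimality of the product, i.e.\ the disjointness statement, which is why the sensitivity half reduces to bookkeeping.
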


\noindent  We generalize it to some skew-product extensions of the original system  in Theorems \ref{main1}, \ref{main2} which represent the main results of this paper. Then, in Theorem \ref{main3} we show that the restriction to skew-product extensions is not too limiting. Finally, our last result, Theorem \ref{main4},  essentially diminishes the possible class of systems which may not satisfy (P2). For convenience, we recall several known results which will be of use in the next sections. 

\begin{lemma}
\label{int} (See \cite{KST}.)  If $(X,T)$ is a minimal system then, for every open set $G\subseteq X$ there is an open set $H\subseteq X$ such that $H\subseteq T(G)\subseteq \overline H$.
\end{lemma}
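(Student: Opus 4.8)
The plan is to reduce the statement to a single fact: a minimal map is \emph{semi-open}, i.e.\ $\mathrm{int}\,T(V)\neq\emptyset$ for every nonempty open $V\subseteq X$. This is really the whole content of the lemma. Assuming semi-openness, I would simply put $H=\mathrm{int}\,T(G)$. Then $H\subseteq T(G)$ by definition, and $H\neq\emptyset$ whenever $G\neq\emptyset$. To obtain $T(G)\subseteq\overline H$, take any $z=T(g)\in T(G)$ with $g\in G$ and any open neighbourhood $O$ of $z$; by continuity there is an open $G'$ with $g\in G'\subseteq G$ and $T(G')\subseteq O$, and semi-openness applied to $G'$ gives $\emptyset\neq\mathrm{int}\,T(G')\subseteq\mathrm{int}\,T(G)\cap O=H\cap O$. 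Hence every neighbourhood of $z$ meets $H$, so $z\in\overline H$ and $T(G)\subseteq\overline H$. Conversely, the two required inclusions force $H\subseteq\mathrm{int}\,T(G)$ and $\mathrm{int}\,T(G)\neq\emptyset$, so the lemma is in fact \emph{equivalent} to semi-openness and no weaker choice of $H$ can work.

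Next I would sketch the semi-openness. Fix a nonempty open $U$. By minimality every forward orbit is dense, so each $x\in X$ satisfies $T^n(x)\in U$ for some $n\ge0$, that is, $X=\bigcup_{n\ge0}T^{-n}(U)$. Compactness yields a finite subcover, hence an $N$ with $X=\bigcup_{n=0}^{N}T^{-n}(U)$. Applying $T^N$ and using that $T$, and so each $T^n$, is surjective (whence $T^n(T^{-n}(U))=U$), this transforms into $X=\bigcup_{m=0}^{N}T^{m}(U)$: finitely many forward images of $U$ already cover $X$. The closed sets $\overline{T^m(U)}$ then cover $X$, so by the Baire category theorem at least one of them has nonempty interior; this is the mechanism that forces the images of $U$ to be topologically large, and it is where minimality enters essentially.

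The hard part is that this Baire step delivers only that \emph{some} $\overline{T^{m}(U)}$ is somewhere dense, whereas the lemma demands that the \emph{first} image $T(G)$ itself contain an open set. Bridging this gap is the delicate point carried out in \cite{KST}: one must pass from the closure $\overline{T^m(U)}$ back to the actual image, controlling the boundary $T(U)\setminus\mathrm{int}\,T(U)$ even though $T$ may be highly non-injective and is in general neither open nor closed, and one must then \emph{peel off the iterate}, transferring largeness of $T^m(U)$ down to $T(U)$. The natural device is to consider the open set of points possessing a neighbourhood with nowhere dense image and to show, via the covering $X=\bigcup_{m}T^{m}(U)$ together with minimality, that such a set cannot exist; making this argument close for non-invertible $T$ is the real difficulty. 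Granting the semi-openness conclusion of \cite{KST}, the reduction of the first paragraph completes the proof.
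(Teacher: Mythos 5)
The paper offers no proof of this lemma at all: it is quoted verbatim from \cite{KST}, so the only ``approach'' in the paper is a citation. Your proposal is correct modulo exactly the same external ingredient, and it adds genuine content. The reduction in your first paragraph --- taking $H=\mathrm{int}\,T(G)$ and using continuity to localize semi-openness to small open sets $G'\subseteq G$ around each preimage point --- is complete and correct, and it shows that the apparently stronger conclusion $T(G)\subseteq\overline{H}$ (density of the interior in the image) is automatic once one knows the bare semi-openness statement $\mathrm{int}\,T(V)\neq\emptyset$ for every nonempty open $V$; your equivalence observation is also right. Your sketch of semi-openness is sound as far as it goes: minimality and compactness give $X=\bigcup_{n=0}^{N}T^{-n}(U)$, surjectivity (which for minimal systems on compact spaces is automatic, since $T(X)$ is a nonempty closed invariant set) converts this to $X=\bigcup_{m=0}^{N}T^{m}(U)$, and then some $\overline{T^{m}(U)}$ has nonempty interior. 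You correctly flag, rather than fudge, the genuinely hard step --- descending from largeness of a closure $\overline{T^{m}(U)}$ to $\mathrm{int}\,T(U)\neq\emptyset$ for a possibly noninvertible $T$ --- and defer it to \cite{KST}, which is where that work is actually done. In sum: where the paper cites \cite{KST} for the full statement, you cite it only for semi-openness and derive the rest yourself; this buys a cleaner modular structure and makes visible that semi-openness is the sole dynamical input, at the cost of not being self-contained --- the same cost the paper itself pays.
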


\begin{lemma}
\label{lyp} (See \cite{AK}.) If $(X,T)$ is weak mixing then, for every $x\in X$, the set  of points $y\in X$ which are proximal but not $\varepsilon$-asymptotic to $x$, is a dense $G_\delta$ subset of $X$.
\end{lemma}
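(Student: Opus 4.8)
The plan is to fix $x\in X$ and to realize the required set as an intersection of two dense $G_\delta$ sets, so that the conclusion drops out of the Baire category theorem. Writing $a_n(y)=\rho(T^n(x),T^n(y))$, which is continuous in $y$, the proximal cell $P(x)=\{y:\liminf_n a_n(y)=0\}$ is automatically $G_\delta$, since $P(x)=\bigcap_{k\ge1}\bigcap_{N\ge1}\bigcup_{n>N}\{y:a_n(y)<1/k\}$ and each inner union is open. Similarly, for a fixed threshold $\varepsilon_0>\varepsilon$ (its admissible size is discussed at the end), the set $R(x)=\bigcap_{N\ge1}\bigcup_{n>N}\{y:a_n(y)>\varepsilon_0\}$ is $G_\delta$ and is contained in $\{y:\limsup_n a_n(y)\ge\varepsilon_0>\varepsilon\}$, i.e. in the set of points not $\varepsilon$-asymptotic to $x$. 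Thus it suffices to prove that each of the open sets $G_{k,N}=\bigcup_{n>N}\{y:a_n(y)<1/k\}$ and $S_N=\bigcup_{n>N}\{y:a_n(y)>\varepsilon_0\}$ is dense; then $P(x)\cap R(x)$ is a dense $G_\delta$ sitting inside the desired set.

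The only tool I would invoke beyond Baire is Furstenberg's theorem that weak mixing implies weak mixing of all finite orders, i.e. $T^{(p)}=T\times\cdots\times T$ ($p$ factors) is transitive for every $p$. From this I would record the auxiliary statement that, for nonempty open sets $U_1,V_1,\dots,U_p,V_p$, the set $\bigcap_{i=1}^{p}N(U_i,V_i)$ is \emph{thick} (contains arbitrarily long runs of consecutive integers), where $N(U,V)=\{n:T^n(U)\cap V\ne\emptyset\}$. The standard proof applies transitivity of $T^{(p(L+1))}$ to the tuples $(U_i)$ and $(T^{-j}V_i)_{0\le j\le L}$ (each $T^{-j}V_i$ is open and nonempty as $T$ is continuous and surjective) to force an interval $[n,n+L]$ into the intersection. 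In particular such an intersection is unbounded, hence meets $(N,\infty)$ for every $N$.

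The heart of the argument, and the step that genuinely needs weak mixing rather than mere transitivity, is that I have no control over the orbit of the fixed point $x$; I circumvent this by reaching a target \emph{regardless} of where $T^n(x)$ happens to lie. For density of $G_{k,N}$, cover $X$ by finitely many balls $B_1,\dots,B_p$ of radius $1/(2k)$ and, given a nonempty open $W$, use the thickness statement for the pairs $(W,B_1),\dots,(W,B_p)$ to choose $n>N$ with $T^n(W)\cap B_i\ne\emptyset$ for \emph{every} $i$; then, whichever cell $B_{i(n)}$ contains $T^n(x)$, some $y\in W$ satisfies $T^n(y)\in B_{i(n)}$, so $a_n(y)<1/k$ and $y\in W\cap G_{k,N}$. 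For density of $S_N$ I fix two points $u,v$ with $d:=\rho(u,v)$ close to $\operatorname{diam}(X)$ and balls $B(u,\eta),B(v,\eta)$ with $\eta$ small; choosing $n>N$ with $T^n(W)$ meeting both balls and noting that $\max\{\rho(T^n(x),u),\rho(T^n(x),v)\}\ge d/2$ for any position of $T^n(x)$, I take $y\in W$ mapping into whichever of the two balls is the far one, so that $a_n(y)\ge d/2-\eta>\varepsilon_0$; thus $y\in W\cap S_N$.

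Finally I would assemble the pieces: $P(x)\cap R(x)=\bigl(\bigcap_{k,N}G_{k,N}\bigr)\cap\bigl(\bigcap_N S_N\bigr)$ is a countable intersection of dense open sets, hence a dense $G_\delta$ by Baire, and by construction every $y$ in it is proximal to $x$ while $\limsup_n a_n(y)\ge\varepsilon_0>\varepsilon$, so is not $\varepsilon$-asymptotic to $x$. The main obstacle, as indicated, is exactly the absence of information about the trajectory $\{T^n(x)\}$; the whole purpose of passing to weak mixing of all orders and of the simultaneous-hitting device is to make the choice of the time $n$ independent of where $T^n(x)$ sits. Two bookkeeping points remain. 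First, the elementary two-anchor estimate only yields separation up to $d/2$, so this argument applies whenever $\varepsilon<\operatorname{diam}(X)/2$; the sharp range $\varepsilon<\operatorname{diam}(X)$ can be recovered by instead invoking that the transitive points of $T\times T$ form a dense $G_\delta$ and have $\limsup_n a_n=\operatorname{diam}(X)$. Second, $\{y:\limsup_n a_n(y)>\varepsilon\}$ is only $G_{\delta\sigma}$ in general; working with the fixed threshold $\varepsilon_0>\varepsilon$ and the ``infinitely often'' set $R(x)$ repairs this and delivers an honest dense $G_\delta$ certifying the conclusion.
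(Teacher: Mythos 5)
Your proof is correct, and it is worth noting that the paper itself gives no proof of this lemma at all --- it is quoted from Akin--Kolyada \cite{AK} --- so your argument is a genuinely self-contained reconstruction rather than a parallel to anything in the text. Your route (the proximal cell written as an explicit $G_\delta$; a fixed-threshold ``infinitely often'' set $R(x)$ replacing the only-$G_{\delta\sigma}$ set $\{y:\limsup_n a_n(y)>\varepsilon\}$; density of each open layer via thickness of $\bigcap_{i}N(U_i,V_i)$, obtained from Furstenberg's theorem by applying transitivity of $T^{(p(L+1))}$ to the $U_i$ and the sets $T^{-j}V_i$, which stay nonempty by surjectivity; and the finite-cover simultaneous-hitting device to make the choice of $n$ independent of where $T^n(x)$ lands) is sound in every step: the $1/(2k)$-radius cover does give $a_n(y)<1/k$, the two-anchor estimate does give $a_n(y)\ge d/2-\eta$, and Baire finishes it. Your two bookkeeping remarks are exactly the right ones; in particular, what the downstream application (Lemma \ref{uncount} in the proof of Theorem \ref{main1}) actually needs is that each cell contains a dense $G_\delta$, which is what $P(x)\cap R(x)$ certifies. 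The nearest in-paper analogue of your fiberwise problem is Lemma \ref{trans}, which handles the ``no control over the orbit of $x$'' difficulty by a nested-open-set construction, but only in the minimal case; your thickness-plus-cover argument works for all weak mixing systems, which is the generality the lemma is stated in.

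One side remark in your last paragraph is wrong, though it does not damage the proof. You cannot in this generality recover the range $\varepsilon<{\rm diam}(X)$ by ``invoking that the transitive points of $T\times T$ form a dense $G_\delta$'': that set is dense $G_\delta$ in $X\times X$, but for a fixed arbitrary $x$ its fiber over $x$ may be empty --- if $x$ is not a transitive point of $T$, no pair $(x,y)$ is transitive for $T\times T$. That device is available only in the minimal setting, where it is precisely Lemma \ref{trans} (and is what powers Theorem \ref{lys} with any $\varepsilon<{\rm diam}(X)$). Indeed, for non-minimal weak mixing and arbitrary $x$ your ${\rm diam}(X)/2$ is essentially optimal: if the orbit closure of $x$ stays near a metric center of $X$, then $\limsup_n a_n(y)$ is bounded by the corresponding radius, which can be strictly smaller than ${\rm diam}(X)$. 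Since the lemma only asks for some admissible $\varepsilon>0$ (and your argument delivers every $\varepsilon<{\rm diam}(X)/2$, assuming ${\rm diam}(X)>0$), the proof stands as written.
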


\begin{lemma}
\label{uncount}  (See \cite{HY1}.) Let $X$ be a complete separable metric space without isolated points. If $R\subseteq X\times X$ is a symmetric relation with the property that  for each $x\in X$, $R(x)=\{ y\in X; (x,y)\in R\}$ contains a dense $G_\delta$ subset, then there is a dense uncountable set $D\subseteq X$ such that $D\times D\setminus\Delta\subset R$, where $\Delta$ is the set of pairs $(x,x)$, $x\in X$.
\end{lemma}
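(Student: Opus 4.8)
The plan is to build the set $D$ as the set of branch points of a \emph{Cantor scheme}, i.e.\ a family of nonempty open sets $\{U_s\}_{s\in 2^{<\omega}}$ indexed by finite binary strings, and to force the relation $R$ to hold between any two distinct branch points. First I would fix a countable base $\{W_k\}_{k\ge 1}$ of $X$ (available since $X$ is separable) and construct the $U_s$ by induction on $|s|$ so that $\overline{U_{s0}}\cup\overline{U_{s1}}\subseteq U_s$ with $\overline{U_{s0}}\cap\overline{U_{s1}}=\emptyset$; $\mathrm{diam}(U_s)\le 2^{-|s|}$; and, to guarantee density, each $W_k$ contains some $U_s$. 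Because $X$ has no isolated points, every nonempty open set contains two disjoint smaller ones, so the splitting is always possible. For $\alpha\in 2^{\omega}$ put $x_\alpha=\bigcap_n U_{\alpha|n}$; the diameter and disjointness conditions make this a single point and make $\alpha\mapsto x_\alpha$ injective, so $D=\{x_\alpha:\alpha\in 2^\omega\}$ is uncountable, while the base condition makes it dense. Thus everything reduces to arranging, during the induction, that $(x_\alpha,x_\beta)\in R$ for all $\alpha\neq\beta$.

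The difficulty is that $R$ is a condition on \emph{pairs} and is not closed, so it cannot simply be inherited by the limit points $x_\alpha$ from finite approximations, and a priori each $x_\alpha$ must be related to the continuum of all other branch points at once. The device that overcomes this is to pass from the hypothesis on sections to genuine residuality of $R$ in the product $X\times X$. In the situations in which the lemma is applied the relation is a countable intersection of open sets: for instance the proximal relation and the ``not $\varepsilon$-asymptotic'' relation occurring in (\ref{e:lys}) and in Lemma \ref{lyp} are both $G_\delta$, so $R$ has the Baire property. Granting this, the Kuratowski--Ulam theorem (the Baire-category analogue of Fubini) turns the hypothesis that every section $R(x)$ is comeager into the statement that $R$ itself is comeager in $X\times X$; hence one may write $R\supseteq\bigcap_{m}O_m$ with each $O_m$ open and dense in $X\times X$.

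With product residuality in hand the induction is routine. At level $n$ I would impose the extra requirement that for every pair of distinct strings $s\neq t$ in $2^{n}$ one has
\begin{equation}
\overline{U_s}\times\overline{U_t}\subseteq O_0\cap O_1\cap\dots\cap O_n .
\end{equation}
This is achievable because a finite intersection of dense open sets is dense open in $X\times X$, so each product box may be shrunk into it while keeping the scheme conditions. Now take $\alpha\neq\beta$ and let $N$ be the first level at which they differ; for every $n\ge N$ the strings $\alpha|n$ and $\beta|n$ are distinct, whence $(x_\alpha,x_\beta)\in\overline{U_{\alpha|n}}\times\overline{U_{\beta|n}}\subseteq O_0\cap\dots\cap O_n$. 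Letting $n\to\infty$ gives $(x_\alpha,x_\beta)\in\bigcap_m O_m\subseteq R$, and since $R$ is symmetric the unordered pair is handled as well. Therefore $D\times D\setminus\Delta\subseteq R$, as required.

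The main obstacle is the middle step: converting the information about sections into residuality of $R$ in the product. The clean Cantor-scheme argument really needs open dense sets in $X\times X$, and a relation all of whose sections are comeager need not be comeager in the product unless it has the Baire property; so the crux is to verify (or to arrange, by first replacing $R$ with a Borel subrelation still having comeager sections) that $R$ is measurable in the Baire sense. Once that is secured, as it is for the $G_\delta$ relations arising from (\ref{e:lys}), the combinatorial construction above goes through verbatim.
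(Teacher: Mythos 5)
Your Cantor-scheme construction is sound as pure combinatorics, but the step on which everything hangs --- upgrading ``every section $R(x)$ contains a dense $G_\delta$'' to ``$R$ is residual in $X\times X$'' --- is not available under the hypotheses of Lemma \ref{uncount}, and this is a genuine gap, not a technicality. The Kuratowski--Ulam theorem yields that implication only for relations with the Baire property, whereas the lemma assumes nothing of the sort: $R$ is an arbitrary symmetric relation, and the dense $G_\delta$ sets $A_x\subseteq R(x)$ are chosen independently for each of continuum many points $x$, so the subrelation $\bigcup_x \{x\}\times A_x$ need not be Borel or Baire-measurable, and your proposed fix of ``replacing $R$ with a Borel subrelation still having comeager sections'' cannot be carried out in general --- there is no definable object to extract. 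Worse, the conclusion your scheme would deliver (a \emph{Cantor set} of branch points, pairwise in $R$) is provably stronger than what the hypotheses give: under CH, enumerate the perfect subsets of $X$ as $\{P_\alpha\}_{\alpha<\omega_1}$ and delete from the full relation, at each stage $\alpha$, one pair of fresh points of $P_\alpha$; every section of the resulting symmetric relation omits at most one point (hence contains a dense $G_\delta$, as $X$ has no isolated points), yet no perfect set is pairwise related. So the Cantor-scheme route cannot be repaired within the stated generality; what it proves is a different theorem (essentially Mycielski's) under an additional definability hypothesis on $R$.

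The paper does not reprove the lemma but cites Huang--Ye \cite{HY1}, whose argument never leaves the section level and needs no measurability: build $D=\{x_\alpha:\alpha<\omega_1\}$ by transfinite recursion, choosing $x_\alpha$ at stage $\alpha$ inside $\bigcap_{\beta<\alpha}A_{x_\beta}$ --- a \emph{countable} intersection of dense $G_\delta$ sets, hence dense $G_\delta$ by the Baire category theorem --- and inside a basic open set from a fixed countable base, cycled through cofinally often so that $D$ is dense. Then $x_\alpha\in A_{x_\beta}\subseteq R(x_\beta)$ gives $(x_\beta,x_\alpha)\in R$ for all $\beta<\alpha$, and symmetry of $R$ supplies the reversed pairs; the result is a dense set of cardinality $\aleph_1$, which is all the lemma claims. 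It is worth noting that your argument \emph{is} correct in the paper's actual applications (Theorems \ref{main1} and \ref{main4}), where the relation in question is the set of $\varepsilon$-Li-Yorke pairs and hence a $G_\delta$ subset of $X\times X$, so Kuratowski--Ulam applies and one even gets Cantor sets; but as a proof of Lemma \ref{uncount} as stated, it fails at exactly the point you yourself flagged as the crux.
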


The following is a topological version of the Fubini Theorem.

\begin{lemma}
\label{FT} (See \cite{A} or \cite{HY1}.) Let $R$ 
be a relation on a complete separable metric space $X$ which contains a dense $G_\delta$ subset of $X\times X$.
 Then there is a dense $G_\delta$ set $A\subseteq X$ 
 such that for each $x\in A$, there exists a dense 
 $G_\delta$ set $X_x\subseteq X$  with $\{ (x,y); x\in A, y\in X_x\}\subseteq R$.
 \end{lemma}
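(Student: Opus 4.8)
The plan is to reduce the statement to the standard Baire-category fact that density of the sections of an open dense set is generic, and then pass to a countable intersection. First I would use the hypothesis to fix a dense $G_\delta$ set $G\subseteq X\times X$ with $G\subseteq R$, and write $G=\bigcap_{n\ge 1}U_n$, where each $U_n$ is open and dense in $X\times X$. Since $X$ is separable, I fix a countable base $\{V_k\}_{k\ge 1}$ of nonempty open subsets of $X$. Throughout, for a set $U\subseteq X\times X$ and a point $x$ I write $U_x=\{y\in X:(x,y)\in U\}$ for the vertical section.

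The core step is a genericity statement for a single open dense factor. For a fixed open dense $U\subseteq X\times X$ and each $k$, consider $W_k=\{x\in X:(\{x\}\times V_k)\cap U\neq\emptyset\}$, which is the image under the (open) coordinate projection $X\times X\to X$ of the open set $U\cap(X\times V_k)$, hence open. I would check that $W_k$ is dense: given any nonempty open $O\subseteq X$, the set $O\times V_k$ is nonempty and open, so it meets $U$, and the first coordinate of such an intersection point lies in $O\cap W_k$. Thus each $W_k$ is open and dense, and $A_U:=\bigcap_k W_k$ is a dense $G_\delta$ subset of $X$ by the Baire category theorem, which applies because a complete metric space is a Baire space. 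By construction, for every $x\in A_U$ the section $U_x$ meets every $V_k$ and is open, hence is open and dense in $X$.

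Now I assemble the pieces. Put $A=\bigcap_{n\ge 1}A_{U_n}$, which is again a dense $G_\delta$ set by Baire. For each $x\in A$ every section $(U_n)_x$ is open and dense, so $X_x:=G_x=\bigcap_{n\ge 1}(U_n)_x$ is a dense $G_\delta$ subset of $X$, once more by Baire. Finally, if $x\in A$ and $y\in X_x$ then $(x,y)\in G\subseteq R$, so $\{(x,y):x\in A,\ y\in X_x\}\subseteq R$, which is exactly the desired conclusion.

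I do not expect a serious obstacle here; this is the topological Kuratowski--Ulam analogue of Fubini's theorem. The only point demanding care is the interchange of the countable intersection defining $G$ with the passage to sections, which is precisely where completeness (the Baire property) is invoked twice: once to produce the dense $G_\delta$ set $A$ of good base points as an intersection of the sets $A_{U_n}$, and once to guarantee that each section $G_x$ remains a dense $G_\delta$ set rather than merely a countable intersection of dense open sets.
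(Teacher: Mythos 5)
Your proof is correct, and it is the standard Kuratowski--Ulam argument: showing that for a single open dense $U\subseteq X\times X$ the set of $x$ with open dense section $U_x$ is residual (via the open, dense projections $W_k$), then intersecting over the countable family $\{U_n\}$ and invoking Baire twice, exactly as you flag. The paper itself offers no proof of this lemma --- it is quoted from \cite{A} and \cite{HY1} --- and your argument is essentially the proof found in those references, so there is nothing to reconcile; the one hypothesis worth stating explicitly, which you use implicitly, is that $X\times X$ is again a complete separable metric space so that the Baire category theorem applies to it as well as to $X$.
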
 

\begin{lemma}
\label{asym} (See \cite{HY1} and \cite{AK}.) If $(X,T)$ is $LYS$ then, for some $\delta >0$, the set of $\delta$-asymptotic pairs is a first category subset of $X\times X$.
\end{lemma}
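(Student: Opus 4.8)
The plan is to fix $\delta$ and $\delta'$ with $0<\delta<\delta'<\varepsilon/2$ and to show that the set $A_\delta=\{(x,y):\limsup_{n}\rho(T^n x,T^n y)\le\delta\}$ of $\delta$-asymptotic pairs is of first category. The first, routine, step is to replace the $\limsup$-condition by closed conditions. Since $\limsup_n\rho(T^n x,T^n y)\le\delta$ forces $\rho(T^n x,T^n y)\le\delta'$ for all large $n$ (here I use $\delta'>\delta$), I get $A_\delta\subseteq\bigcup_{N\ge 1}C_N$, where $C_N=\{(x,y):\rho(T^n x,T^n y)\le\delta'\text{ for all }n\ge N\}$. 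Each $C_N$ is closed, being the intersection over $n\ge N$ of the closed sets $\{(x,y):\rho(T^n x,T^n y)\le\delta'\}$. Hence it suffices to prove that every $C_N$ is nowhere dense, for then $\bigcup_N C_N$, and with it $A_\delta$, is meager.

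The heart of the argument is therefore the nowhere-density of the closed set $C_N$, i.e.\ that it has empty interior. Suppose for contradiction that $\mathrm{int}\,C_N$ contains a nonempty open box $U\times V$, so that every pair $(x,y)\in U\times V$ satisfies $\rho(T^n x,T^n y)\le\delta'$ for all $n\ge N$. The main obstacle is precisely here: $U$ and $V$ need not lie near the diagonal, and may even be disjoint, whereas Li-Yorke sensitivity is by definition a statement about points $y$ approaching a given $x$, i.e.\ a condition near the diagonal. I expect this mismatch to be the only genuine difficulty, and I would remove it by a triangle-inequality synchronization trick. Fix one point $y_0\in V$; then for any $x,x'\in U$ and any $n\ge N$ one has $\rho(T^n x,T^n x')\le\rho(T^n x,T^n y_0)+\rho(T^n y_0,T^n x')\le 2\delta'$. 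Thus every two points of $U$ are $2\delta'$-asymptotic, and the far-from-diagonal box has been converted into information about the near-diagonal square $U\times U$.

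Now the contradiction with Li-Yorke sensitivity is immediate. Pick any $x_0\in U$. Since $(X,T)$ is $LYS_\varepsilon$, the point $x_0$ is a limit of points $y$ for which $(x_0,y)$ is an $\varepsilon$-Li-Yorke pair; as $U$ is open I may take such a $y$ inside $U$, so that $\limsup_n\rho(T^n x_0,T^n y)>\varepsilon$. But $x_0,y\in U$ were just shown to be $2\delta'$-asymptotic, giving $\limsup_n\rho(T^n x_0,T^n y)\le 2\delta'<\varepsilon$, a contradiction. Hence $C_N$ has empty interior and is nowhere dense, and carrying this out for every $N$ completes the proof.
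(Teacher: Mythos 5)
Your proof is correct. Note first that the paper itself does not prove this lemma; it is quoted from the references \cite{HY1} and \cite{AK}, so there is no in-paper argument to compare against, and your write-up in effect supplies the proof the paper leaves to the literature. Your argument is the standard one and every step checks out: the relaxation from $\delta$ to $\delta'>\delta$ correctly converts the $\limsup$ condition into the countable union of the closed sets $C_N$; and the one genuine difficulty --- that an interior box $U\times V$ of $C_N$ may sit far from the diagonal while $LYS$ speaks only about pairs near the diagonal --- is exactly where you place the weight, resolving it by the triangle inequality through a fixed $y_0\in V$ to conclude that all pairs in $U\times U$ are $2\delta'$-asymptotic, which contradicts $LYS_\varepsilon$ once $2\delta'<\varepsilon$ (your choice $\delta<\delta'<\varepsilon/2$ guarantees this). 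One small bonus worth noticing: your argument actually yields the conclusion for \emph{every} $\delta<\varepsilon/2$, not merely for \emph{some} $\delta>0$, and since $LYS_\varepsilon$ implies $LYS_\delta$ for $\delta\le\varepsilon$, this gives precisely the simultaneous form (``$T$ is $LYS_\delta$ and $Asym_\delta$ is first category for the same $\delta$'') that the paper invokes in the proof of Theorem \ref{main4}.
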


\section{Minimal finite-type skew-product extensions of weak mixing systems}

\begin{lemma}
\label{min} Let $(X,T)$ be minimal, $A=\{ a_1,\cdots ,a_m\}$ a finite set with discrete topology, $Y=X\times A$, $S$ a skew-product map $Y\to Y$ so that $S(u,v)=(T(u), G_u(v))$, and $M\subseteq Y$ a minimal set. Then
\begin{enumerate}
    \item for every  $u\in X$,  the map $G_u$ restricted to the set  $M_u:=M\cap (\{ u\}\times A)$, is injective;
    \item there is a nonempty set $B\subseteq A$ such that $(M, S|_M)$ is conjugate to $(X\times B, S|_{X\times B})$.
\end{enumerate}
\end{lemma}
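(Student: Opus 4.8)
The plan is to view $M$ as a finite extension of the base through the projection $\pi\colon M\to X$, $\pi(u,a)=u$, and to control the fibres $M_u=M\cap(\{u\}\times A)$. First I would record the standard reductions. Since $\pi$ is continuous and $\pi\circ S=T\circ\pi$, the set $\pi(M)$ is a nonempty closed $T$-invariant subset of the minimal system $(X,T)$, hence $\pi(M)=X$; thus every $M_u$ is a nonempty subset of the finite set $A$. Minimality of $M$ gives $S(M)=M$, so $G_u(M_u)\subseteq M_{Tu}$ and in fact $M_w=\bigcup_{u\in T^{-1}(w)}G_u(M_u)$ for every $w$. Finally, because $A$ is discrete and $S$ is continuous, each map $u\mapsto G_u(a)$ is locally constant; equivalently, writing $C_a=\{u:(u,a)\in M\}$, each $C_a$ is closed and $M_u=\{a:u\in C_a\}$.

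For (1) I would study the integer-valued $\phi(u)=|M_u|=\sum_a\mathbf 1_{C_a}(u)$, which is upper semicontinuous since each $C_a$ is closed. The phenomenon I must exclude is \emph{merging}: if some $G_u$ identifies $v_1\ne v_2\in M_u$, then $p=(u,v_1)$ and $q=(u,v_2)$ are distinct points of $M$ with $S^np=S^nq$ for all $n\ge1$, i.e. a proper asymptotic (hence proximal) pair sitting inside the minimal set $M$. I claim both injectivity (1) and the constancy of $\phi$, and I would establish them together. When $T$ is invertible the argument is clean: then $M_{Tu}=G_u(M_u)$, so $\phi\circ T\le\phi$, and a short argument using upper semicontinuity together with the density of the forward orbit of a point minimizing $\phi$ and of the backward orbit of a point maximizing $\phi$ forces $\phi$ to be constant, say $\phi\equiv n$; the equality $|G_u(M_u)|=|M_u|$ then yields injectivity and excludes merging.

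This is exactly where I expect essentially all the work to lie, and the main obstacle is the non-invertible case. There $M_{Tu}$ may receive contributions from several base-preimages of $Tu$, the identity $M_{Tu}=G_u(M_u)$ fails, and the clean backward-orbit argument is unavailable; a merge at $u$ need not even drop $\phi(Tu)$. The tool I would use to repair this is Lemma~\ref{int}: minimal maps are semi-open, and this is precisely what lets one recover a lower bound for $\phi\circ T$ to match the upper bound coming from semicontinuity, and thereby run the counting that forbids merging. I would note that passing to an $S$-invariant measure on $M$ is tempting, since every nonempty clopen set then has positive mass, but measure preservation alone is consistent with finite-to-one behaviour, so the argument must be topological and use minimality through Lemma~\ref{int} rather than through a measure.

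Granting constancy $\phi\equiv n$, statement (2) is comparatively routine. Since $\sum_a\mathbf 1_{C_a}\equiv n$ with each summand upper semicontinuous, each $C_a$ must in fact be clopen, so $M=\bigsqcup_a(C_a\times\{a\})$ is a finite, locally trivial extension of $X$ and each $G_u\colon M_u\to M_{Tu}$ is a bijection. I would then fix $u_0$, take the label set $B:=M_{u_0}\subseteq A$ (so $|B|=n$), and transport this labelling along a dense orbit; because the fibres are finite and $G$ is locally constant, the labelling is locally constant and extends to a homeomorphism $h\colon M\to X\times B$ over $X$ conjugating $S|_M$ to the skew product $(u,b)\mapsto(Tu,\sigma_u(b))$ with $\sigma_u\in\mathrm{Sym}(B)$ locally constant, which is what $S|_{X\times B}$ denotes. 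The only delicate point here is the global consistency (monodromy) of the labelling, which the minimality of $X$ together with the finiteness of $A$ resolves.
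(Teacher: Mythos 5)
There is a genuine gap, and it sits exactly where you placed it: the non-invertible case of part (1). Your invertible-case argument is correct ($M_{Tu}=G_u(M_u)$ gives $\phi\circ T\le\phi$; $\phi$ is integer-valued and upper semicontinuous, so it attains its maximum, the backward orbit of a maximizer is dense, and u.s.c. then forces $\phi$ constant, whence $|G_u(M_u)|=|M_u|$ and injectivity). But for general minimal $T$ you offer only the hope that Lemma \ref{int} will ``recover a lower bound for $\phi\circ T$'' and let a counting argument run; that is a direction, not a proof, and it is moreover the wrong direction: as you yourself observe, a merge at $u$ need not depress $\phi(Tu)$ at all, since other preimages of $Tu$ can replenish the fibre, so no global bookkeeping of fibre cardinalities can by itself exclude merging. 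Since your part (2) is derived from the constancy $\phi\equiv n$, which in turn rests on (1), the whole proposal is conditional on the unproved case.

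The paper's proof of (1) uses none of this counting and needs no invertibility; it is purely local, and you already had the key ingredient in your reductions. Since $A$ is discrete and $S$ continuous, $w\mapsto G_w(v)$ is locally constant; hence a single merge $G_u(v_1)=G_u(v_2)=v$ with $v_1\ne v_2$ persists on a whole neighborhood $U$ of $u$, so that $U_1=U\times\{v_1\}$ and $U_2=U\times\{v_2\}$ are disjoint nonempty open sets with $S(U_1)=S(U_2)$. This is impossible for a minimal system by Lemma \ref{int}: in substance, if two disjoint open sets had the same image, the complement of one of them would be a proper closed set mapping onto the whole space, which minimality forbids. Thus the merge-propagation to an open set is the missing idea, not a refined estimate on $\phi$. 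For the record, once (1) is known, injectivity gives $\phi(Tu)\ge|G_u(M_u)|=\phi(u)$ for \emph{all} $u$ (no invertibility needed), and transitivity plus local constancy of $u\mapsto G_u|_{M_u}$ yields constancy of the fibre cardinality, which is how the paper proves (2); from that point your route to (2) is sound and close to the paper's (your observation that u.s.c. indicator functions $\mathbf 1_{C_a}$ summing to a constant force each $C_a$ to be clopen is a clean way to get the clopen decomposition), and the monodromy worry is vacuous: one may choose the fibre bijections independently on each clopen piece, since any fibre-preserving homeomorphism over the identity of $X$ conjugates $S|_M$ to some skew product on $X\times B$, which is all the lemma asserts.
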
    
   
\begin{proof} (i) Assume $G_u(v_1)
=G_u(v_2)=v$, for some $u$ in $X$ and 
$v_1\ne v_2$ in $A$. Since $A$ is discrete,
by the continuity of $S$ there is a neighborhood 
$U$ of $u$ such that, for every $w\in U$, 
$G_w(v_1)=G_w(v_2)=v$. Then  $U_1=U\times
\{v_1\}$ and $U_2=U\times\{v_2\}$ would be disjoint 
nonempty open sets with $S(U_1)=S(U_2)$. 
But this is impossible, by Lemma \ref{int}. 

(ii) Let $\mathcal H=\{ h_1,\cdots , h_l\}$ be the collection of   maps $G_u|_{M_u}$, $u\in X$. By the continuity, there is a decomposition of $X$ into clopen sets $X_1,\cdots , X_l$ such that, for every $u\in X_j$, $G_u|_{M_u}=h_j$. Let $c_i$ be the number of points in the domain of $h_i$ and let, say, $c_1\ge c_i$, for every $i$. Since $T$ is transitive, (i) implies $c_i=c_1$, for every $i$. It follows that $M=X_1\times A_1\cup X_2\times A_2\cup\cdots\cup X_l\times A_l$, where  $\#A_j=c_1$, for every $j$. Hence $M$ is conjugate to $X\times A_1$.
\cbd
\end{proof}

\medskip

\begin{lemma}
\label{trans} If $(X,T)$ is minimal weak mixing, then
for every $x\in X$ the set ${\rm Tran}(x)\subset X$ of points $y$ such that $(x,y)$ is a transitive point with respect to $T\times T$, is a dense $G_\delta$ set.
\end{lemma}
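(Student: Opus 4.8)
The plan is to treat the two defining features of a transitive point separately: that $\mathrm{Tran}(x)$ is a $G_\delta$, and that it is dense. Fix a countable base $\{W_k\}$ of $X\times X$ consisting of product boxes $W_k=U_k\times U_k'$ with $U_k,U_k'$ nonempty open. A pair $(x,y)$ is a transitive point of $T\times T$ exactly when its forward orbit meets every $W_k$, so $\mathrm{Tran}(x)=\bigcap_k O_k$, where $O_k=\{y\in X:\exists\, n\ge 0,\ T^n x\in U_k\ \text{and}\ T^n y\in U_k'\}$. For each fixed $n$ the set $\{y:T^n y\in U_k'\}=(T^n)^{-1}(U_k')$ is open, and $O_k$ is the union of these over the $n$ with $T^n x\in U_k$; hence every $O_k$ is open and $\mathrm{Tran}(x)$ is a $G_\delta$. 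Since $X$ is compact, hence a Baire space, it then suffices to show that each $O_k$ is dense, and the conclusion follows from the Baire category theorem.

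So I fix a box $U\times U'$ and a nonempty open $V\subseteq X$, and must produce $y\in V$ and $n\ge 0$ with $T^n x\in U$ and $T^n y\in U'$. Introduce the two return sets $N_1=\{n\ge 0:T^n x\in U\}$ and $N_2=\{n\ge 0:V\cap T^{-n}U'\ne\emptyset\}$; the desired $y,n$ exist if and only if $N_1\cap N_2\ne\emptyset$. Minimality of $(X,T)$ makes $N_1$ \emph{syndetic}: covering $X$ by the sets $T^{-n}U$ and extracting a finite subcover yields an $L$ such that every block of $L$ consecutive integers meets $N_1$. On the other side I claim $N_2$ is \emph{thick}, i.e.\ it contains arbitrarily long blocks of consecutive integers. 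Granting this, a block of length $L$ inside $N_2$ must meet the syndetic set $N_1$, so $N_1\cap N_2\ne\emptyset$ and therefore $O_k\cap V\ne\emptyset$.

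The thickness of $N_2$ is where weak mixing enters, through its strengthening to all orders: by Furstenberg's theorem (see \cite{A}) weak mixing of $T$ implies that $(X^{k+1},T\times\cdots\times T)$ is transitive for every $k$. To force $\{m,m+1,\dots,m+k\}\subseteq N_2$, I apply transitivity of the $(k+1)$-fold product to the nonempty open sets $V^{k+1}$ and $U'\times T^{-1}U'\times\cdots\times T^{-k}U'$: this yields an $m$ and points $z_0,\dots,z_k\in V$ with $T^m z_j\in T^{-j}U'$, that is $T^{m+j}z_j\in U'$, for $j=0,\dots,k$, so each of $m,m+1,\dots,m+k$ lies in $N_2$. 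As $k$ is arbitrary, $N_2$ is thick. I expect the main obstacle to be exactly this coupling of the fixed orbit of $x$ with a variable open set $V$: minimality alone controls only the visits of $x$ to $U$, while transitivity of $T\times T$ by itself is too weak to place hitting times in a prescribed syndetic set, so the argument genuinely relies on all-orders weak mixing to manufacture blocks in $N_2$ long enough to be guaranteed to overlap $N_1$.
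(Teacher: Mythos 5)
Your proof is correct, and although it rests on the same two dynamical ingredients as the paper's argument --- syndeticity of the visit times of $x$ to $U$ (from minimality) and thickness of the hitting-time set $N(V,U')$ (from weak mixing), intersected to produce a common time $n$ --- the architecture is genuinely different. The paper runs a hand-built nested construction: it fixes a base $\{I_n\times J_n\}$ of $X\times X$ and inductively produces open sets with $\overline{U_j}\subset U_{j-1}$ and times $n_j$ with $T^{n_j}(x)\in I_j$ and $T^{n_j}(U_j)\subset J_j$, then takes $y\in\bigcap_j \overline{U_j}$, nonempty by compactness. To keep that nesting going it must know that the forward image $T^{n_1}(U_0)$ has nonempty interior, which is exactly where Lemma \ref{int} (minimal maps are almost open) is invoked. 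You instead write ${\rm Tran}(x)=\bigcap_k O_k$ with $O_k=\bigcup_{n:\,T^n x\in U_k}(T^n)^{-1}(U_k')$ open, prove each $O_k$ dense, and let the Baire category theorem do the bookkeeping; since openness travels through preimages rather than forward images, you need only a single point of $V\cap T^{-n}(U_k')$, and Lemma \ref{int} is avoided altogether --- a small but genuine simplification. A second difference: the paper quotes as known that $N(U_0,J_1)$ contains arbitrarily long blocks of consecutive integers, whereas you derive this thickness from Furstenberg's all-orders weak mixing theorem by hitting $U'\times T^{-1}U'\times\cdots\times T^{-k}U'$ from $V^{k+1}$, which is clean and self-contained (note $T^{-j}U'\neq\emptyset$ because $T$ is surjective by the paper's standing convention, so the target set is indeed nonempty open). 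Both proofs are sound; yours is more modular, while the paper's constructive version has the minor advantage of exhibiting the transitive partner $y$ as a limit of an explicit nested sequence.
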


\begin{proof} 
Let $x_0\in X$ be given.  The set ${\rm Tran}(x_0)$ of points $y\in X$ such that $(x_0,y)$ is a transitive point of $T\times T$, is a $G_\delta$ set since it is the intersection of two $G_\delta$ sets, the set of transitive points $(x,y)\in X\times X$, and $\{x_0\}\times X$. So it suffices to show that ${\rm Tran}(x_0)$ is dense in $X$. Let $\{ G_n\} _{n\ge 1}$ be a  base of open sets for $X\times X$ of the form $G_n=I_n\times J_n$, where $I_n,J_n$ are open sets. Let $U_0\subset X$ be nonempty open. By induction, 
there are nonempty open 
sets $U_0\supset U_1\supset U_2\supset \cdots$, and a sequence $n_1<n_2<\cdots$ of positive 
integers such that
\begin{equation}
\label{eq99}
{\overline  U_{j}}\subset U_{j-1}, \ T^{n_j}(x_0)\in I_j \ {\rm and} \  T^{n_j}(U_j) \subset J_j, \ j\in\mathbb N.
\end{equation}

Indeed, since $T$ is minimal, there is a $k_1>0$ such that, for every $j$, there is an $s$, $0\le s <k_1$, with $T^{j+s}(x_0)\in I_1$. Since $T$ is weak mixing, the set $N(U_0,J_1)$ of times $i$ such that $T^i(U_0)\cap J_1\ne\emptyset$, contains arbitrarily long blocks of successive integers. It follows that there is an $n_1$ such that $T^{n_1}(x_0)\in I_1$ and $T^{n_1}(U_0)\cap J_1\ne\emptyset$. Since $T$ is minimal and $U_0$ open, $T^{n_1}(U_0)$ has nonempty interior (see Lemma \ref{int}) and hence $T^{n_1}(U_0)\cap J_1$ contains a nonempty open set $H$. It suffices to take for $U_1$ a nonempty open set such that ${\overline U_1}\subset T^{-n_1}(H)$. Thus, we have $n_1$ and $U_1$ satisfying (\ref{eq99}) (for $j=1$). Next we apply the above process with $U_0$ replaced by $U_1$, $G_1$ by $G_2$, obtaining $U_2\subset U_1$ and $n_2>n_1$, etc. This proves (\ref{eq99}). 

To finish the argument put $Y=\bigcap _{j\ge 1}U_j=\bigcap _{j\ge 1}{\overline U_j}$. Then $Y\ne\emptyset$ is a $G_\delta$ set and, by (\ref{eq99}), for every $y\in Y$, $(x_0,y)$ is a transitive point.
\cbd
\end{proof}

\medskip

For $\xi>0$ let $\Delta_\xi\subseteq X\times X$ be the set of pairs $(x,y)$ such that $\rho(x,y)<\xi$. 

\begin{theorem}
\label{lys} 
Let $(X,T)$ be a minimal weak mixing topological dynamical system. Let $A$ be a finite space with discrete topology, $Y=X\times A$ with the max-metric, and  $(Y,S)$ a skew-product extension of $(X,T)$ such that $S(t,a)=(T(t), G_t(a))$, where every fibre map $G_t$ is a bijection of $A$. Then $(Y,S)$ is $LYS_\varepsilon$ for any $0<\varepsilon < {\rm diam} (X)$.  
\end{theorem}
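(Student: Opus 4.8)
The plan is to fix an arbitrary point $z=(x_0,a_0)\in Y$ together with an arbitrary nonempty open set $U_0\subseteq X$, and to produce a partner $w=(y,a_0)$ with $y\in U_0$ for which $(z,w)$ is an $\varepsilon$-Li-Yorke pair; since $U_0$ is arbitrary this yields a dense family of partners accumulating at $z$, and since $z$ is arbitrary it gives $LYS_\varepsilon$. Note first that, because the partner must converge to $z$ and $A$ carries the discrete metric, its second coordinate is forced to equal $a_0$; thus the whole problem lives in the single fibre $X\times\{a_0\}$ and reduces to a statement about the base orbits of $x_0$ and $y$ together with the induced action on the finite set $A$.

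I would begin with a structural reduction. Since $A$ is finite and discrete and $t\mapsto G_t$ is continuous, this map is locally constant, so there is a finite clopen partition $X=X_1\sqcup\cdots\sqcup X_r$ and permutations $g_1,\dots,g_r$ with $G_t\equiv g_i$ on $X_i$. As the $X_i$ are disjoint and compact, $\delta_0:=\min_{i\ne j}\mathrm{dist}(X_i,X_j)>0$. Writing $S^n(x,a)=(T^n x,\,G^{(n)}_x(a))$ with $G^{(n)}_x=G_{T^{n-1}x}\circ\cdots\circ G_x$, the crucial observation is that $G^{(n)}_x$ depends only on the itinerary of $x$ through the partition, and that whenever $\rho_X(T^n x_0,T^n y)<\delta_0$ the two points lie in the same cell and therefore apply the \emph{same} permutation at step $n$. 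Consequently the ``fibre discrepancy'' $E_n:=(G^{(n)}_y)^{-1}\circ G^{(n)}_{x_0}$, which satisfies $E_0=\mathrm{id}$ and detects fibre agreement via $G^{(n)}_{x_0}(a_0)=G^{(n)}_y(a_0)\iff E_n(a_0)=a_0$, changes only at the times when $T^n x_0$ and $T^n y$ fall in \emph{different} cells.

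Next I would treat the two requirements of \eqref{e:lys} separately. For the non-asymptotic part I use Lemma~\ref{trans}: if $y$ is chosen so that $(x_0,y)$ is a transitive point of $T\times T$, then its orbit is dense in $X\times X$, so it visits a neighbourhood of a pair at distance $\mathrm{diam}(X)$; since the max-metric dominates the base distance, $\limsup_n\rho_Y(S^n z,S^n w)\ge\mathrm{diam}(X)>\varepsilon$, while denseness near the diagonal gives $\liminf_n\rho_X(T^n x_0,T^n y)=0$. Thus transitivity of $(x_0,y)$ secures ``not $\varepsilon$-asymptotic'' and base-proximality at once, and by Lemma~\ref{trans} such $y$ form a dense $G_\delta$ subset of $U_0$. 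What remains is to upgrade base-proximality to proximality in $Y$, that is, to arrange \emph{infinitely many} times $n_k$ with $\rho_X(T^{n_k}x_0,T^{n_k}y)\to 0$ and simultaneously $E_{n_k}(a_0)=a_0$.

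This last point is the heart of the matter and where I expect the real difficulty. I would attack it by an inductive construction in the spirit of the proof of Lemma~\ref{trans}: build nested open sets $\overline{U_{j+1}}\subset U_j$ and times $n_j\nearrow\infty$ so that (i) along a cofinal subsequence the orbit of $(x_0,y)$ enters every member of a countable base of $X\times X$, guaranteeing transitivity, and (ii) along another subsequence $T^{n_k}(U_k)$ and $T^{n_k}(x_0)$ are squeezed into a common cell of diameter $<\delta_0$ \emph{while} the accumulated permutation of the eventual $y$ agrees with that of $x_0$ on $a_0$. Step (ii) is the obstacle: at each such stage one must \emph{steer} the itinerary of $y$ over the block preceding $n_k$ so as to realise the single prescribed value $G^{(n_k)}_y(a_0)=G^{(n_k)}_{x_0}(a_0)$, using weak mixing (which by Lemma~\ref{int} supplies images with nonempty interior and arbitrarily long blocks of connecting times) to route $T^n y$ through whichever cells are needed to reset $E$ to a permutation fixing $a_0$, and minimality to return near the chosen diagonal point. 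Finiteness of $A$ ensures only finitely many possible discrepancies, hence only finitely many corrective excursions are ever required; the delicate part, which must be argued carefully — and where the hypothesis that each $G_t$ is a \emph{bijection} enters, via the injectivity and constant-fibre structure of Lemma~\ref{min} — is that the excursion needed to carry the current discrepancy to one fixing $a_0$ is always achievable by an admissible itinerary compatible with the simultaneous approach to the diagonal. Granting this, the point $y\in\bigcap_j U_j$ yields $w=(y,a_0)$ with $(z,w)$ satisfying \eqref{e:lys}, which completes the argument.
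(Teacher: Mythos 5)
Your proposal correctly reduces the problem to the single fibre $X\times\{a_0\}$, correctly handles the $\limsup$ condition via Lemma~\ref{trans} (this matches the paper), and correctly identifies that the entire difficulty is the proximality upgrade: finding infinitely many near-diagonal times $n$ at which the accumulated permutations of $x_0$ and $y$ agree at $a_0$. But at exactly that point the proposal has a genuine gap, which you acknowledge yourself (``Granting this\dots''). The steering strategy --- using weak mixing to route $T^n y$ through cells that reset the discrepancy $E_n$ to a permutation fixing $a_0$, while simultaneously returning near the diagonal --- is precisely the reachability claim that needs proof, and nothing in the proposal establishes it. Weak mixing lets you hit any open target in $X\times X$, but the discrepancy at a near-diagonal return time is not a free parameter: it is determined by the pair of itineraries, and the set of discrepancies realizable \emph{at near-diagonal times} is some a priori unknown subset of the permutation group. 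It could conceivably be a coset or subsemigroup avoiding every permutation that fixes $a_0$; ruling this out is the theorem's actual content, and ``finiteness of $A$ ensures only finitely many corrective excursions'' does not address it. The inductive scheme also has a structural problem: the correction needed at stage $k$ depends on the full itinerary of the eventual $y$ up to time $n_k$, which is only pinned down by later stages of the induction, so the construction as sketched does not close.

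The paper avoids steering altogether and closes the gap with a non-constructive counting argument. For a transitive pair $(x,y)$ it records, at each $\eta$-close time $i$, the pair of accumulated fibre maps $c_i=(g_i,h_i)$, and considers the finite set $C(x,y,\eta)$ of such pairs occurring in a saturated chain. Transitivity lets one concatenate chains (Claim~1), giving $C(x,y,\eta)\supseteq C(x,y,\eta')\circ c_{j_s}$ for $\eta'\le\eta$; a pigeonhole argument (Claim~2) shows $\#C(x,y,\eta)$ is the same for all transitive pairs and stabilizes at some value $m_0$ for all $\eta$ below a threshold $\xi$. Since $\rho(x,y)<\xi$ forces $j_0=0$, the identity pair $(Id,Id)=c_0$ lies in $C(x,y,\eta)$; equality of the finite cardinalities, together with $c_{j_s}$ being a bijection, then forces $(Id,Id)\in C(x,y,\eta')\circ c_{j_s}$ (Claim~3). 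This yields, for every $\eta'<\xi$, a time at which the base points are $\eta'$-close \emph{and} both accumulated permutations coincide, so $((x,a),(y,a))$ is proximal in the max-metric --- exactly the recurrence of the identity that your steering argument tries, but fails, to manufacture. In short: your scaffolding agrees with the paper's, but the paper's Claims 1--3 supply the missing idea (cardinality stabilization of the set of reachable discrepancies forces the identity to recur), and without something equivalent your proof does not go through.
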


\begin{proof}
The following terminology and notation will be useful. For every $z=(x,y)\in X\times X$ and $i\in\mathbb N$, denote by $(T \times T)^i(z)=(x_i, y_i)$ the $i$th iterate of $z$, with $x_0:=x$, $y_0:=y$. Let $g_0=h_0=Id$, the identity and,  for $i>0$ let $g_i=G_{x_{i-1}}\circ G_{x_{i-2}}\circ\cdots\circ G_{x_0}$, similarly let $h_i$ be the composition of $i-1$ corresponding maps $G_{y_j}$, and let $c_i:=(g_i,h_i)$. For $\eta>0$ let $N= N(x,y,\eta)=\{i\in\mathbb N; (x_i,y_i)\in\Delta_\eta\}$.  The sequence $\{c_i\}_{i\in N}$ is the $\eta$-{\it characteristic sequence} of $(x,y)$. Let $j_0<j_1<\cdots$ be the numbers in $N$. A finite string $c_{j_0}, c_{j_1}, \cdots, c_{j_{k-1}}$  is an $\eta$-{\it saturated chain for} $(x,y)$ {\it of length} $k$ if the string contains all members of the $\eta$-characteristic sequence of $(x,y)$; we denote it as $M(x,y,\eta)$, and we let $C(x,y, \eta)$ denote the set of elements in $M(x,y, \eta)$. Notice that we do not determine uniquely the length of a saturated string: if $M(x,y,\eta)=\{c_{j_0}, \cdots, c_{j_{k-1}}\}$ then $\{c_{j_0}, \cdots, c_{j_k},c_{j_{k}}\}$ is also saturated string. When dealing  with an another pair, $(x^\prime,y^\prime)$, we use primes to distinguish the related symbols like $x^\prime_i, y^\prime_i$, $j^\prime_i, c^\prime_{j^\prime_i}$, $k^\prime$, etc. By the continuity, for every saturated chain $M(x,y,\eta)$ of length $k$ there is an open neighborhood $U(x,y,\eta)$ of $(x,y)$ such that, for  any pair $(x^\prime,y^\prime)\in U(x,y,\eta)$,  $\{(x^\prime_i,y^\prime_i)\}_{0\le i\le j_{k-1}}$ traces $\{(x_i,y_i)\}_{0\le i\le j_{k-1}}$ so that $\rho((x_i,y_i),(x^\prime_i,y^\prime_i))<\eta$ and $(x^\prime_i,y^\prime _i)\in\Delta_\eta$ for $i\in\{j_0, j_1, j_2,\cdots j_{k-1}\}$.  In particular,  $M(x,y,\eta)=M(x^\prime,y^\prime,\eta)$.

Saturated strings $M(x,y,\eta), M(x^\prime,y^\prime, \eta^\prime)$ with $\eta^\prime\le\eta$ of two {\it transitive} pairs $(x,y)$ and $(x^\prime,y^\prime)$ of length $k$ and $k^\prime$, respectively, can be joined in a single  chain  of $(x,y)$ in the following sense.  Since $(x,y)$ is  transitive, there is an $n\ge j_{k-1}$ such that $(x_n,y_n)\in U(x^\prime, y^\prime, \eta^\prime)$. It follows that  $n=j_s$ for some $s\ge k-1$, and the trajectory  $\{ (x_{n+i}, y_{n+i})\}$ traces the trajectory $\{ (x^\prime_{i}, y^\prime_{i})\}$ for $i\in [0,j^\prime_{k^\prime-1}]$, remaining within distance $\eta^\prime\le\eta$ for $i=j^\prime_l,0\le l\le k^\prime-1$.  We denote the resulting  string as $M(x,y,\eta)*M(x^\prime, y^\prime,\eta^\prime)$. Its length is $s+k^\prime $. Thus, we have the following\\

{\bf Claim 1.} {\it Let $\eta \ge \eta^\prime>0$, and $M(x,y,\eta),M(x^\prime, y^\prime, \eta^\prime)$ be saturated strings of transitive pairs, of length $k$ and $k^\prime$, respectively. Then

(i) The string $M(x,y,\eta)*M(x^\prime, y^\prime,\eta^\prime)$ need not be saturated for $\eta$ or $\eta^\prime$, but it is obtained  from $M(x,y,\eta)$ of sufficiently hight length by omitting some elements;

(ii) $C(x,y,\eta)\supseteq C(x^\prime, y^\prime,\eta^\prime)\circ c_{j_s}$,

\noindent where $j_s$ is specified above ($c_{j_s}$ is the element \lq\lq connecting\rq\rq both saturated strings), and $\{f,g\}\circ h$ means $\{f\circ h, g\circ h\}$.}\\

{\bf Claim 2.} {\it Let $(x,y), (x^\prime,y^\prime)$  be transitive pairs.   Then 

(i) $\# C(x,y,\eta)=\# C(x^\prime, y^\prime,\eta)$ for any $\eta >0$;

(ii) there is a $\xi>0$ such that if $(x,y), (x^\prime, y^\prime)\in\Delta_\xi$ and $0<\eta, \eta^\prime <\xi$ then $\# C(x,y,\eta)=\# C(x^\prime, y^\prime,\eta^\prime)$.} \\

{\it Proof of Claim 2.}  (i)  Assume  $m:=\# C(x,y,\eta)<m^\prime: =\# C(x^\prime, y^\prime,\eta)$. Then $\# C(x^\prime, y^\prime, \eta)= m^\prime$ and, by Lemma B(ii), $C(x,y,\eta)$ contains $m^\prime$ distinct elements, a contradiction.

(ii) We may assume $\eta^\prime<\eta$. Then obviously $\#C(x,y,\eta^\prime)\le\#C(x,y,\eta)$. Since $G$ is finite, there is a $\xi >0$, and $m_0\ge 1$ such that $\# C(x,y,\eta)=m_0$ whenever $\eta <\xi$. To finish apply (i).
$\hfill\Box$\\

{\bf Claim 3.} {\it Let $\xi$ be as in Claim 2,  and  $(x,y)\in\Delta_{\xi}$ be a transitive pair, and $\eta^\prime <\eta:= \xi$.   If $c_{j_s}\in C(x,y,\eta)$ is the element connecting the strings  $M(x,y,\eta)$ and $M(x,y,\eta^\prime)$, then $C(x,y,\eta^\prime) \circ c_{j_s}$ contains the identity map $(Id, Id)$.}\\

{\it Proof of Claim 3.} Let $x,y$ be as in the hypothesis. By Claim 1, $M(x,y,\eta)*M(x,y,\eta^\prime)$ contains the string $M(x,y,\eta^\prime)\circ c_{j_s}$.  But $M(x,y,\eta^\prime)\circ c_{j_s}$  must contain the identity. To see this note that, by definition,  the first member $c_0$ of the $\eta$-characteristic sequence is the identity. Since $\rho (x,y)< \eta$ $(=\xi$, $c_0$ is also the first member of $M(x,y,\eta)$, i.e., $j_0=0$. By Claim 2 (ii), $C(x, y, \eta)$ has the same cardinality as $C(x,y,\eta^\prime)$ hence as $C(x,y,\eta^\prime)\circ c_{j_s}$, since $c_{j_s}$ is a bijection. Thus $C(x,y,\eta^\prime) \circ c_{j_s}$ contains the identity map $(Id, Id)$.
$\hfill\Box$\\

Finally, let $x \in X$ and $U$ be an arbitrary neighborhood of $x$. Assume that $\xi>0$ is as in Claim 2. By Lemma \ref{trans} there is a point $y \in U$ such that $(x, y) \in \Delta_\xi$ is transitive with respect to $T \times T$. Since $Y$ is equipped with the $\max$-metric it suffices to prove that if $a\in A$ then, for every $0<\eta^\prime <\xi$,  $w=((x,a)(y,a))$ is an $\eta^\prime$-proximal pair. This follows by Claim 3. 
 $\hfill\Box \Box$
 \end{proof}

\begin{theorem}
\label{main1} Let $(X,T)$ be minimal weak mixing, and $A\ne\emptyset$ a finite metric space. Let $S$ be a skew-product map of $X\times A$. Then, for every minimal set $M\subseteq X \times A$ of $S$, $(M, S|_{M})$ is $LYS_\varepsilon$, for some $\varepsilon >0$. Moreover, $S$ is $LYC_\varepsilon$ and for any $a\in A$, there is a dense $\varepsilon$-scrambled set $D_a\subset X\times \{ a\}$ of type $G_\delta$.
\end{theorem}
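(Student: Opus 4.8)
The plan is to obtain the first assertion from Theorem \ref{lys} by a conjugacy, to build an uncountable scrambled set from the relation of fibred Li--Yorke pairs via Lemma \ref{uncount}, and only then to try to upgrade it to a $G_\delta$ set, which I expect to be the delicate point.

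For the $LYS_\varepsilon$ claim on a minimal set $M$, I would reduce to Theorem \ref{lys}. By Lemma \ref{min}(ii), $(M,S|_M)$ is conjugate to $(X\times B,S|_{X\times B})$ for some nonempty $B\subseteq A$, and by Lemma \ref{min}(i) the induced fibre maps on $B$ are injective, hence bijective since $B$ is finite. Thus $(X\times B,S|_{X\times B})$ is a skew-product of exactly the type covered by Theorem \ref{lys}, which gives $LYS_\varepsilon$ for every $0<\varepsilon<\mathrm{diam}(X)$. The conjugacy only relabels fibres over clopen pieces of $X$, so it preserves the $X$-coordinate; since the max-metric dominates that coordinate, both the proximality and the non-$\varepsilon$-asymptoticity transfer back to $M\subseteq X\times A$ with a fixed value of $\varepsilon$, which I then fix for the rest of the argument.

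Next, fix $a\in A$ and introduce
\[
R_a=\{(x,y)\in X\times X:\ ((x,a),(y,a))\ \text{satisfies (\ref{e:lys}) for }S\}.
\]
I would first show $\mathrm{Tran}_2\subseteq R_a$, where $\mathrm{Tran}_2\subseteq X\times X$ is the (symmetric, dense $G_\delta$) set of transitive points of $T\times T$. Indeed, if $(x,y)$ is transitive its orbit re-enters $\Delta_\xi$ infinitely often, and the finite-fibre realignment mechanism (Claim 3 in the proof of Theorem \ref{lys}), applied at such a time, forces the fibre coordinates over $a$ to coincide along a subsequence, so $((x,a),(y,a))$ is proximal; density of the orbit in $X\times X$ separates the $X$-coordinates by nearly $\mathrm{diam}(X)$, so the pair is not $\varepsilon$-asymptotic. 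Now $R_a$ is symmetric and each section satisfies $R_a(x)\supseteq\mathrm{Tran}(x)$, which is a dense $G_\delta$ by Lemma \ref{trans}; since a nontrivial minimal weak mixing $X$ is perfect, Lemma \ref{uncount} yields a dense uncountable $D\subseteq X$ with $D\times D\setminus\Delta\subseteq R_a$, and $D\times\{a\}$ is then an uncountable $\varepsilon$-scrambled set, proving $LYC_\varepsilon$.

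The main obstacle is the final clause, that the scrambled set may be taken dense $G_\delta$. Lemma \ref{uncount} only delivers a dense uncountable (Mycielski-type, $F_\sigma$) set, and one cannot hope to find a residual clique of $\mathrm{Tran}_2$ itself: the graph of $T$ is a proper closed $T\times T$-invariant set, hence disjoint from $\mathrm{Tran}_2$, so for invertible $T$ a comeager $E$ with $E\times E\setminus\Delta\subseteq\mathrm{Tran}_2$ would force $E\cap T^{-1}E=\emptyset$, contradicting that $E$ and $T^{-1}E$ are both comeager. Hence the $G_\delta$ set must be built against the strictly larger relation $R_a$ of genuine fibred Li--Yorke pairs, exploiting that $R_a(x)$ is dense $G_\delta$ for \emph{every} $x$ and that weak mixing makes every power $T^{\times k}$ transitive. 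I would attempt a recursive Baire-category construction of $E=\bigcap_k V_k$ with $V_k$ dense open, in which at stage $k$ one controls finitely many prescribed pairs of basic neighbourhoods so that their representatives are first driven simultaneously close (proximality) and, at a later time, far apart (sensitivity). Converting this diagonalisation into honest dense open conditions, so that the clique property $E\times E\setminus\Delta\subseteq R_a$ survives passage to the intersection and yields a genuine $G_\delta$ rather than a mere union of Cantor sets, is precisely where I expect the real difficulty; it is the step that most uses the full strength of weak mixing together with minimality.
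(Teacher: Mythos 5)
Your reduction of the first assertion is exactly the paper's own route (Lemma \ref{min} plus Theorem \ref{lys}), and that part is fine. The genuine gap is your key inclusion $\mathrm{Tran}_2\subseteq R_a$, which is false in general. Take $X$ a Cantor-type minimal weak mixing system, $A=\{0,1\}$ viewed as $\mathbb{Z}_2$, fix a continuous non-constant $\psi\colon X\to\mathbb{Z}_2$, and let $S(x,j)=\bigl(T(x),\,j+\psi(T(x))-\psi(x)\bigr)$. Then $h(x,j)=(x,j+\psi(x))$ conjugates $S$ to $T\times Id$, and $h$ sends the pair $((x,a),(y,a))$ to $((x,a+\psi(x)),(y,a+\psi(y)))$. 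If $\psi(x)\neq\psi(y)$, the fibre coordinates disagree for all time under $T\times Id$, so $((x,a),(y,a))$ is \emph{never} proximal; yet transitive pairs with $\psi(x)\neq\psi(y)$ exist, since $\mathrm{Tran}_2$ is residual and $\{(x,y):\psi(x)\neq\psi(y)\}$ is open and nonempty. Your realignment argument breaks precisely where Claim 3 of the proof of Theorem \ref{lys} uses its hypothesis $(x,y)\in\Delta_\xi$: that hypothesis guarantees the identity cocycle $c_0=(Id,Id)$ occurs at a close time ($j_0=0$), which is what seeds the group-theoretic argument putting the identity into $C(x,y,\eta')\circ c_{j_s}$. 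For a far-apart transitive pair, at the first re-entry time $n$ into $\Delta_\xi$ the fibre coordinates are already $g_n(a)$ and $h_n(a)$, and nothing realigns them; in the example the relative displacement is locked at $\psi(x)-\psi(y)\neq 0$ at \emph{every} close time. Consequently the section $R_a(x)$ is only guaranteed to contain $\mathrm{Tran}(x)\cap B_\xi(x)$, residual in a $\xi$-ball but not dense in $X$, so Lemma \ref{uncount} does not apply the way you use it.

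The same example shows your target (and, read literally, the theorem's final clause) needs qualification: any scrambled subset of $X\times\{a\}$ for this $S$ consists of points with a common $\psi$-value, hence lies in a proper clopen set and cannot be dense in $X\times\{a\}$; density as claimed is safe when, e.g., $X$ is connected (then $x\mapsto G_x$ is constant and your inclusion does hold), or after restricting to a suitable clopen piece. What survives of your argument without change is $LYC_\varepsilon$: run the Mycielski construction of Lemma \ref{uncount} relative to a single small ball, using that transitive pairs \emph{starting} in $\Delta_\xi$ are fibred Li--Yorke pairs. For comparison, the paper's own proof of the ``moreover'' part is a one-line appeal to Lemma \ref{uncount}, which never addresses the $G_\delta$ claim (Mycielski-type sets are countable unions of Cantor sets, hence $F_\sigma$); your final paragraph correctly isolates this as the unproved point, and your observation that the graph of $T$ obstructs residual cliques of $\mathrm{Tran}_2$ is correct and worth keeping --- but your proposed bridge to a dense relation, the inclusion $\mathrm{Tran}_2\subseteq R_a$, is exactly the step that fails.
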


\begin{proof} 
The first part  follows by Lemmas \ref{lyp}, \ref{min}, \ref{trans} and Theorem \ref{lys}, the last statement by Lemma \ref{uncount}. To finish the argument, fix an $a\in A$, and let  $Y:=X\times\{ a\}$. 
Denote by  $R$ the set of $\varepsilon$-Li-Yorke pairs $(x,y)$ in $Y\times Y$. By Lemma \ref{uncount} there is an uncountable  dense scrambled set $D_a\subset Y$ such that every distinct points in $D_a$ form an $\varepsilon$-Li-Yorke pair.
\cbd
\end{proof}

\section{Infinite type skew-product extensions of weak mixing systems}

Theorem \ref{lys} and hence, Theorem \ref{main1} can be generalized to certain types of skew-product maps of $X\times A$, where $(X,T)$ is minimal weak mixing, and $A$ is infinite compact. As a sample we provide the following. Recall that an {\it adding machine} or {\it odometer} related to a sequence $p_1,p_2,\cdots$ of primes is a system $(X,\tau )$, where $X=\prod _{j\ge 1} X_j$, $X_j=\{0, 1, \cdots ,p_j-1\}$, and $\tau (x_1x_2x_3\cdots )=x_1x_2x_3\cdots + 1000\cdots$, when adding is modulo $p_j$ at the $j$th position from the left to the right, see, e.g., \cite{BK}. Obviously $X$ is a Cantor-type set.
 
\begin{theorem}
\label{main2} Let $(X,T)$ be minimal weak mixing, $Y$ a Cantor-type set, and $S: X\times Y\to X\times Y$ a skew-product map, $S(x,y)=(T(x), R_x(y))$ such that, for every $x\in X$, $R_x$ is an odometer, or the identity. Then $(X\times Y, S)$ is LYS.
\end{theorem}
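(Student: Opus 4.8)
Throughout, realise $Y$ as the procyclic group $\varprojlim Y_m$, where $Y_m=\mathbb Z/P_m\mathbb Z$, $P_m=p_1p_2\cdots p_m$, with natural projections $\pi_m\colon Y\to Y_m$; let $\mathbf 1=(1,0,0,\dots)$ be the generator, so that the odometer is translation by $\mathbf 1$. Since the identity and the odometer share no fixed point, continuity of $S$ forces the indicator $\phi\colon X\to\{0,1\}$, $\phi(x)=1$ iff $R_x\ne\mathrm{id}$, to be locally constant; hence there is a fixed $\delta_0>0$ with $\phi(u)=\phi(v)$ whenever $\rho(u,v)<\delta_0$. Writing $\Psi_n(x,x')=\sum_{i=0}^{n-1}\bigl(\phi(T^ix)-\phi(T^ix')\bigr)$, the $Y$-coordinates of $S^n(x,y)$ and $S^n(x',y)$ differ exactly by $\Psi_n(x,x')\,\mathbf 1$; note that $\phi(T^ix)-\phi(T^ix')=0$ whenever $\rho(T^ix,T^ix')<\delta_0$, so $\Psi$ is frozen along every near-diagonal block of the orbit of $(x,x')$ under $T\times T$.

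The plan is to fix $0<\varepsilon<\mathrm{diam}(X)$ and, given $(x,y)$ and a neighbourhood, produce a companion of the form $(x',y)$ with $x'$ arbitrarily close to $x$ and $\rho(x,x')<\delta_0$, such that $(x,x')$ is a transitive point of $T\times T$ and the pair $((x,y),(x',y))$ is proximal for $S$. Transitivity makes the orbit of $(x,x')$ dense in $X\times X$, so $\limsup_n\rho(T^nx,T^nx')=\mathrm{diam}(X)>\varepsilon$, which in the product metric already gives that the pair is not $\varepsilon$-asymptotic; such transitive companions are dense by Lemma \ref{trans}. Everything thus reduces to the proximality, i.e.\ to finding a subsequence $n_j\to\infty$ along which simultaneously $\rho(T^{n_j}x,T^{n_j}x')\to0$ and $\Psi_{n_j}(x,x')\,\mathbf 1\to0$ in $Y$; the second condition means precisely that, for every fixed $m$, $P_m\mid\Psi_{n_j}(x,x')$ once $j$ is large. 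Once such pairs are produced for a residual set of companions, Lemma \ref{uncount} upgrades them to dense $G_\delta$ scrambled sets exactly as in Theorem \ref{main1}.

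To locate the fibre returns I would pass to the finite cyclic quotients. For each $m$ the projection $\pi_m$ conjugates the fibre action to the finite skew-product $S_m$ on $X\times Y_m$, whose fibre maps are rotations or the identity of $Y_m$, hence bijections; thus Theorem \ref{lys} applies to $S_m$. Interpreting its conclusion for the base point at hand, the orbit of $((x,0),(x',0))$ in $X\times Y_m$ is proximal, that is, there are near-diagonal times $n$ with $P_m\mid\Psi_n(x,x')$. Because $P_m\mid\Psi_n$ forces $P_{m'}\mid\Psi_n$ for every $m'\le m$, these conditions are monotone in $m$, and a diagonal selection $n_1<n_2<\cdots$ with $n_j$ drawn from level $j$ and $\rho(T^{n_j}x,T^{n_j}x')<1/j$ would deliver the single subsequence required above. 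Equivalently, one shows that the fibre, over the diagonal, of the orbit closure of $(x,x',0)$ under $(u,v,z)\mapsto(Tu,Tv,z+(\phi(u)-\phi(v))\mathbf 1)$ -- a closed subset of $Y$ that is $T$-invariant and hence, by minimality of $(X,T)$ together with the freezing of $\Psi$ near the diagonal, independent of the diagonal point -- has each projection $\pi_m$ containing $0$; since $Y$ is profinite this is exactly $0$ lying in that diagonal fibre, which gives the desired returns.

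The hard part is the simultaneity across levels. Theorem \ref{lys} yields the level-$m$ proximality only after Claim 2 of its proof furnishes a scale $\xi_m>0$ within which the companion must lie, and since the fibre group $Y_m=\mathbb Z/P_m\mathbb Z$ grows with $m$ I expect $\xi_m\to0$; thus no single fixed companion $(x,x')$ can be fed to all levels at once, and correspondingly the diagonal fibre of the orbit closure could a priori be a nonzero coset of a proper subgroup. I would attack this not by fixing $x'$ in advance but by an inductive nesting of neighbourhoods in the spirit of the proof of Lemma \ref{trans}: at stage $j$ one shrinks the current neighbourhood and selects $n_j$ so that the partial pair is already $\xi_j$-close at level $j$, securing $P_j\mid\Psi_{n_j}$ together with $\rho(T^{n_j}x,T^{n_j}x')<1/j$ and the usual transitivity targets. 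The reason one may hope to keep all earlier levels intact while refining the current one is precisely that $\phi$ is locally constant, so that whenever the orbit of $(x,x')$ lies within $\delta_0$ of the diagonal the increments of $\Psi$ vanish; making this bookkeeping consistent for every $j$ simultaneously -- equivalently, forcing the diagonal coset to vanish at all levels -- is the delicate core of the argument, and is exactly where, in line with the general difficulty of (P1), the proof becomes genuinely hard.
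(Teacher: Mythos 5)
Your proposal is not a proof: its decisive step is conceded rather than established. In the last paragraph you yourself identify the core difficulty --- producing a \emph{single} transitive companion $(x,x')$ whose cocycle $\Psi_n$ vanishes modulo $P_j$ at near-diagonal times for \emph{every} level $j$, while the admissible closeness scale $\xi_m$ coming from Claim 2 in the proof of Theorem \ref{lys} shrinks as the quotient fibre $Y_m$ grows --- and you leave it unresolved, so what you have is a reduction plus an identified obstruction. The paper removes exactly this obstruction with Lemma \ref{apr}: for every $\delta>0$ there is \emph{one} $m$ such that $|y-R_x^{km}(y)|<\delta$ for all $x\in X$, $y\in Y$ and $k\in\mathbb N$, proved by a compactness argument (otherwise clopen $R_{x_j}$-periodic portions of diameter $\ge\delta$ with periods $m_j\to\infty$ would converge to a portion of ``infinite period,'' contradicting that $R_{x_0}$ is an odometer or the identity). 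This uniform approximate periodicity means that at each scale $\delta$ the infinite fibre behaves, up to a $\delta$-error, like a finite-type fibre of period $m(\delta)$, so the saturated-chain machinery of Theorem \ref{lys} together with Lemma \ref{trans} can be run scale by scale, with proximality at scale $\delta$ needing only the level-$m(\delta)$ control; no single all-levels congruence of the kind you were trying to force is ever required. That uniformization across $x$ and across scales is precisely the missing idea in your plan.

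Separately, your opening reduction is too narrow for the statement as given. You fix one procyclic group structure on $Y$ and assume every non-identity fibre map is translation by $\mathbf 1$, deducing that the indicator $\phi$ is locally constant because ``the identity and the odometer share no fixed point.'' But the hypothesis allows $R_x$ to be a \emph{different} odometer for each $x$, and then local constancy of $\phi$ fails: on a fixed Cantor set a sequence of adding machines can converge uniformly to the identity (cyclically permute a fine clopen partition along a back-and-forth order so that each piece moves only to a nearby piece, then refine recursively; by the Block--Keesling characterization such maps are adding machines), so $\{x: R_x\ne \mathrm{id}\}$ need not be open, and with it your cocycle formula $\Psi_n(x,x')\,\mathbf 1$ for the fibre discrepancy collapses. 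The paper's route via Lemma \ref{apr} is insensitive to this, since it uses only that each individual $R_x$ admits clopen periodic decompositions of arbitrarily small mesh, not that the fibre maps lie in a single translation group.
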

 
\begin{proof}
It is similar to the proof of Theorem \ref{main1}. It is based on the following Lemma \ref{apr}, and on Theorem \ref{lys} and Lemma \ref{trans}.
\cbd
\end{proof}

\begin{lemma}
\label{apr} Let $X, Y$ and $S$ be as in Theorem \ref{main2}. Then for every $\delta >0$ there is an $m>0$ such that, for every $x\in X$, $y\in Y$ and $k\in\mathbb N$, $|y- R_x^{km}(y)|<\delta$.
\end{lemma}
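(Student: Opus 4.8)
The plan is to reduce the statement to the defining periodic structure of an odometer. Recall that if $R$ is the adding machine on $Y=\prod_{j\ge 1}Y_j$, $Y_j=\{0,\dots ,p_j-1\}$, and $N_n:=p_1p_2\cdots p_n$, then adding $N_n$ (with carry) returns each of the first $n$ digits to its original value; equivalently, $R^{N_n}$ preserves every cylinder $[a_1\cdots a_n]=\{y\in Y:\ y_j=a_j,\ 1\le j\le n\}$ and moves each point only within such a cylinder. Since the metric on $Y$ induces the product topology, I would first fix, given $\delta>0$, an integer $n$ so large that every level-$n$ cylinder $[a_1\cdots a_n]$ has diameter less than $\delta$; this is possible because the level-$n$ cylinders form a clopen partition of the compact set $Y$ whose mesh tends to $0$.

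Having fixed such an $n$, I would set $m:=N_n=p_1p_2\cdots p_n$ and verify that this single $m$ works simultaneously for every $x$, $y$ and $k$. There are two cases. If $R_x$ is the identity, then $R_x^{km}(y)=y$ and the inequality is trivial. If $R_x$ is an odometer with respect to the fixed coordinate structure of $Y$, then $R_x^{N_n}$ fixes the first $n$ coordinates of every point, hence so does the iterate $R_x^{km}=(R_x^{N_n})^{k}$, being a composition of maps each of which preserves the first $n$ coordinates. Consequently $y$ and $R_x^{km}(y)$ lie in the same level-$n$ cylinder, so $|y-R_x^{km}(y)|<\delta$ by the choice of $n$, uniformly over $x$, $y$ and $k$.

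The delicate point, and the one on which the argument really rests, is the uniformity in $x$: the same exponent $m$ must annihilate the first $n$ digits for every fibre map $R_x$ at once. This is immediate provided all the odometers $R_x$ are adding machines with respect to one and the same filtration of $Y$ into levels (equivalently, the same divisibility chain $N_1\mid N_2\mid\cdots$), which is precisely the situation of Theorem \ref{main2}, where $Y$ carries a single fixed Cantor (odometer) structure. Granting this, continuity of $S$ is not needed for the estimate itself; it only guarantees that the locus where $R_x$ is an odometer and the locus where $R_x=\mathrm{id}$ assemble into a genuine continuous skew product. I expect the only real obstacle to be making precise that one level structure serves every fibre: if the odometers were allowed to refer to incompatible clopen filtrations of $Y$, then no uniform $m$ could exist, so this compatibility is exactly the content that must be extracted from the meaning of ``$R_x$ is an odometer'' on a fixed $Y$.
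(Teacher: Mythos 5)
Your computation is valid exactly when every non-identity fibre map $R_x$ is the adding machine with respect to \emph{one fixed} product presentation $Y=\prod_{j\ge 1}Y_j$, but that is not what Theorem \ref{main2} gives you: it assumes only that $Y$ is a Cantor-type set and that each individual system $(Y,R_x)$ is an odometer (or the identity). Nothing in the hypothesis fixes a common prime sequence or a common clopen filtration across the fibres. For instance, $R_x=h_x\circ\tau\circ h_x^{-1}$, with $\tau$ a fixed adding machine and $h_x$ a continuously varying family of homeomorphisms of $Y$, satisfies the hypotheses; here $R_x^{N_n}$ preserves the $x$-dependent partition into sets $h_x(C)$, $C$ a level-$n$ cylinder, and need not fix your fixed level-$n$ cylinders at all. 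So the key step ``$R_x^{N_n}$ fixes the first $n$ coordinates for every $x$'' is unjustified. You do flag this as the delicate point, but you then dispose of it by asserting that a single fixed odometer structure ``is precisely the situation of Theorem \ref{main2}'' (it is not --- $Y$ is merely a Cantor-type set) and by claiming that with incompatible filtrations ``no uniform $m$ could exist.'' That last claim is false and in fact contradicts the lemma itself: uniformity in $x$ is exactly what the lemma asserts, and it comes not from a shared coordinate structure but from the continuity of $S$ together with compactness. In the conjugated example above, uniform continuity of $(x,z)\mapsto h_x(z)$ on the compact set $X\times Y$ already yields a uniform $m=N_n$ even though no two fibres share a filtration.

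The paper's proof supplies precisely the ingredient you waved away. It reduces the lemma to the claim that for every $\delta>0$ and every $x$ there is a decomposition of $Y$ into clopen portions $Y_1,\dots,Y_m$ of diameter less than $\delta$ forming an $R_x$-periodic orbit (then $R_x^{km}$ keeps each point inside its portion; finitely many such decompositions, obtained by compactness in $x$, can be combined, e.g.\ via a least common multiple of the periods, into one $m$ valid for all $x$). This claim is then proved by contradiction: otherwise one finds periods $m_j\nearrow\infty$, points $x_j\to x_0$, and clopen portions $Y_j$ of diameter at least $\delta$ that are $R_{x_j}$-periodic of period $m_j$; a Hausdorff limit $Y_0$ of the $Y_j$ is a compact portion of diameter at least $\delta$ with $R_{x_0}^n(Y_0)$ disjoint from $Y_0$ for every $n>0$, which is impossible when $(Y,R_{x_0})$ is an odometer or the identity. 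To repair your argument you must either add the common-filtration assumption explicitly (which proves a strictly weaker statement than Lemma \ref{apr}) or make the cylinder structure $x$-dependent and then extract the uniform $m$ by a compactness/continuity argument of exactly this kind.
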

 
\begin{proof}
 It suffices to show that, for every $\delta >0$ and for every $x\in X$, there is a decomposition of $Y$ into clopen  portions $Y_1, Y_2, \cdots , Y_m$ forming an $R_x$-periodic orbit, such that the diameter of every $Y_j$ is less than $\delta$. Assume the contrary. Then there is an increasing sequence $m_1<m_2<\cdots$ of positive integers, a 
sequence $x_1, x_2, \cdots$ in $X$, and a sequence $Y_1, Y_2, \cdots$ of clopen portions of $Y$ such that, for every $j$, ${\rm diam} (Y_j)\ge \delta$ and $Y_j$ is a periodic portion with respect to $R_{x_j}$ of period $m_j$. Taking a subsequence if necessary, we may assume that $\lim _{j\to\infty}x_j=x_0$, and $\lim _{j\to\infty} Y_j=Y_0$. Then $Y_0$ is a compact portion of $Y$ with \lq\lq infinite\rq\rq period, i.e., $R_{x_0}^n(Y_0)$ is disjoint from $Y_0$, for every $n>0$, contrary to the assumption that $R_{x_0}$ is an odometer, or the identity.
\cbd
\end{proof}

\section{Finite-type extensions and skew product systems}
   
  Here we show that the assumption in Theorem \ref{main1}, that the corresponding map is a skew-product map on $Y\times\{ 1,2,\cdots , n\}$ is  not too restrictive since, for certain but not all types of minimal weak mixing systems every $n$ to one  extension is a skew-product map, see Theorem \ref{main3}.
 On the other hand, by the next lemma and the subsequent remark, not every finite type extension of a minimal weak mixing system is (conjugate to) a skew-product map. Recall  (see, e.g.,  \cite{Kur} for details) that a {\it continuum} is a nonempty connected compact metric space. A continuum $X$ is {\it unicoherent} if for every two continua $A,B$ with $A\cup B=X$ the set $A\cap B$ is connected.
 
\begin{lemma}  
\label{ex} Let $(Y_1,\rho _1)$ be a  continuum which is not unicoherent. (In particular, let $Y_1$ be the circle). Let $Y=Y_0\times Y_1$, where $(Y_0,\rho _0 )$ is a compact metric space. Finally, let $S$ be a continuous map $Y\to Y$. Then for every $k\in\mathbb N$ there is an isometric extension $(X,T)$ of $(Y,S)$, with  factor map $\pi : X\to Y$ such that $\#\pi ^{-1}(y)=k$ for every $y\in Y$, and $(X,T)$ is not conjugate to a skew-product map $Y\times \{ 1,\cdots ,k\}\to Y\times \{ 1,\cdots ,k\}$.
\end{lemma}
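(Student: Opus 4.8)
\emph{The plan} is to reduce the assertion to a statement about the topological type of the covering $\pi\colon X\to Y$ rather than about the dynamics. A skew-product map on $Y\times\{1,\dots,k\}$ has as its phase space the \emph{trivial} $k$-fold cover $Y\times\{1,\dots,k\}$ of $Y$, and an isomorphism of extensions of $(Y,S)$ onto such a map is a fibre-preserving homeomorphism $h\colon X\to Y\times\{1,\dots,k\}$ with $\mathrm{pr}_1\circ h=\pi$, i.e.\ precisely a trivialisation of the bundle $\pi$. Hence it suffices to produce a $k$-to-one covering extension $\pi\colon X\to Y$ of $(Y,S)$ that is \emph{nontrivial} as a bundle: then no conjugacy of extensions onto a skew product can exist. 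The hypothesis that $Y_1$ is not unicoherent is exactly what furnishes nontrivial covers, since by the Eilenberg characterisation (see \cite{Kur}) non-unicoherence of $Y_1$ is equivalent to the existence of an \emph{essential} map $\psi\colon Y_1\to S^1$, one admitting no continuous logarithm, equivalently to $\check H^1(Y_1;\mathbb Z)\neq 0$.

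First I would fix such a $\psi$, set $\Psi=\psi\circ\mathrm{pr}_{Y_1}\colon Y\to S^1$, and arrange that the class $[\Psi]\in\check H^1(Y;\mathbb Z)$ has nonzero reduction modulo $k$ (possible because $[\psi]$ generates a copy of $\mathbb Z$). Pulling back the standard $k$-fold cover $p_k\colon S^1\to S^1$, $p_k(z)=z^k$, along $\Psi$ then gives
\[
X=\{(y,w)\in Y\times S^1:\ w^k=\Psi(y)\},\qquad \pi(y,w)=y,
\]
a $k$-to-one covering of $Y$ which is nontrivial precisely because $[\Psi]$ is not divisible by $k$ (equivalently, $\Psi$ does not lift through $p_k$, so the cyclic monodromy around an essential loop is nontrivial). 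In the circle case $X$ is homeomorphic to $Y_0\times S^1$.

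Next I would install the dynamics. Writing the coordinates of $Y$ as $(y_0,y_1)$, a lift of $S$ is a map $T(y,w)=(S(y),b(y,w))$ with $b(y,w)^k=\Psi(S(y))$, that is, a continuous $k$-th root $b$ of $\Psi\circ S\circ\pi$ on $X$. Granting such a $b$, one has $\pi\circ T=S\circ\pi$; equipping $X\subseteq Y\times S^1$ with the max-metric makes $\pi$ distance-nonincreasing, and $T$ carries each $k$-point fibre $\pi^{-1}(y)$ (the set of $k$-th roots of $\Psi(y)$) bijectively and isometrically onto $\pi^{-1}(S(y))$, so $(X,T)$ is an isometric $k$-to-one extension of $(Y,S)$. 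Nontriviality of $\pi$ then excludes any conjugacy of extensions onto a skew product, as explained above.

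\emph{The hard part} will be the existence of the root $b$, equivalently the liftability of the \emph{given} map $S$ to the nontrivial cover. The obstruction lies in $\check H^1(X;\mathbb Z)$ modulo $k$: on $X$ one has $\pi^*[\Psi]=k\,[w]$, so a lift exists as soon as $\pi^*S^*[\Psi]$ is divisible by $k$, which holds whenever $[\Psi]$ can be chosen to be an eigenclass of $S^*$ (for example $S^*[\Psi]=m\,[\Psi]$, giving $\pi^*S^*[\Psi]=mk\,[w]$). The delicate point is to secure, for an \emph{arbitrary} continuous $S$, a nonzero essential class on $Y$ that is $S^*$-invariant up to an integer factor; I expect this to be the crux, to be handled by exploiting the full freedom in choosing $[\Psi]$ inside the $S^*$-module $\check H^1(Y;\mathbb Z)$, or by enlarging the structure group from a cyclic group to the permutation group $\Sigma_k$ so as to gain the necessary room. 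Once $T$ is in place, the remaining verifications (continuity, the isometric-extension property, and the fibre count $\#\pi^{-1}(y)=k$) are routine.
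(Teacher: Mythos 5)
Your construction of the phase space is essentially the paper's in cohomological dress: the paper builds $X_1\subset Y_1\times\mathbb S$ as $(A\times K_0)\cup\bigcup_{v\in B\setminus A}(\{v\}\times\varphi_{t(v)}(K_0))$, which is exactly the pullback of the standard $k$-fold cover along the Urysohn-type circle map $v\mapsto e^{2\pi i\,t(v)}$ on $B$, $\equiv 1$ on $A$ (this is also how non-unicoherence enters in Kuratowski's treatment, so your appeal to essential maps is in the right spirit). The genuine gap is precisely the step you flag as ``the hard part'' and leave open: the existence of the lift $T$. Your proposal never produces the root $b$, and the fallback ideas (an $S^*$-eigenclass, enlarging the structure group to $\Sigma_k$) are not carried out; moreover the difficulty is real in your formulation, since for a general non-unicoherent $Y_1$ carrying several independent circle classes and an $S$ that permutes them, $\pi^*S^*[\Psi]$ need not be divisible by $k$, so the scheme ``fix a cover, then lift $S$'' can fail outright for your chosen cover. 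The paper does not solve your divisibility problem; it sidesteps it by \emph{defining} $T$ explicitly via formula \eqref{defg2}: since every fibre is a rotated copy $\varphi_{t}(K_0)$ of the $k$-th roots of unity, the rotation $\varphi_{t(S(y)')-t(y')}$ carries the fibre over $y$ onto the fibre over $S(y)$, for an arbitrary continuous $S$ and with no hypothesis on $S^*$. (One must still check continuity of \eqref{defg2} across the seams $y'=a,b$, where the two branches differ by the deck rotation $\varphi_1$ --- this is your obstruction in disguise --- but the point is that the paper's route makes the lift concrete rather than reducing it to an unproved cohomological condition.)

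A second, smaller gap: your non-conjugacy criterion is too weak for the statement. You reduce ``conjugate to a skew product'' to ``the bundle $\pi$ is trivial,'' but that reduction presumes the conjugacy is an isomorphism of extensions, i.e.\ a fibre-preserving homeomorphism with $\mathrm{pr}_1\circ h=\pi$. The lemma asserts that \emph{no} conjugacy exists, fibre-preserving or not, and a nontrivial bundle can in principle have total space homeomorphic --- even dynamically conjugate --- to a trivial one via a map not respecting $\pi$. The paper uses a cruder, conjugacy-invariant obstruction: by choosing the monodromy to generate the full cyclic group, $X_1$ is \emph{connected}, whereas $Y_1\times\{1,\dots,k\}$ has $k$ components, so the total spaces are not even homeomorphic. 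To repair your argument you would need to insist that $[\Psi]$ has full order $k$ modulo $k$ (nonzero reduction alone only gives nontriviality, and for composite $k$ yields a disconnected pullback) and then conclude via connectedness as the paper does, rather than via bundle nontriviality.
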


\begin{proof}
Since $Y_1$ is not unicoherent there are continua $A, B\ne Y_1$ such that $A\cap B$ is not connected. Hence, $A\cap B= C\cup D$ where $C$, $D$ are nonempty disjoint compact sets. Assume first that $A\cap B=\{ a,b\}$. Let $k>1$ be an integer, $K_0\subset
  \mathbb S$ the set of points on the unit circle representing the $k$th roots of $1$ and, for any $t\in I$, let $\varphi _t$ be the rotation of the set $\mathbb S$ at angle $2t\pi /k$ in the positive direction. Thus, $\varphi _1(K_0)=K_0$. For $v\in B$ let $t(v)=\rho _1(a,v)/\rho _1(a,b)$. Let $X_1\subset Y_1\times\mathbb S$ be given by $X_1=(A\times K_0)\cup \bigcup _{v\in B\setminus A} (\{ v\}\times \varphi _{t(v)}(K_0))$, and let $X=Y_0\times X_1$; obviously, $X_1$ is connected.  For a $y\in Y$ denote by $y^\prime$ the projection of $y$ onto $Y_1$ and let $T: X\to X$ be such that, for $(y,z)\in X$ with $y\in Y$, 
  
\begin{equation}
\label{defg2}
 T(y,z)= \left\{
\begin{array}{ll}
(S(y), z)  & {\rm if} \ \ y^\prime, S(y)^\prime\in A,\\
(S(y), \varphi _{t(S(y)^\prime) - t(a)}(z))& {\rm if}  \ \  y^\prime\in A , S(y)^\prime \in B\setminus A,\\
 (S(y), \varphi _{t(b)-t(y^\prime )}(z))&  {\rm if} \ \ y^\prime \in B\setminus A, S(y)^\prime\in A, \\
 (S(y),\varphi _{t(S(y)^\prime )- t(y^\prime))}(z))  & {\rm if} \ \  y^\prime , S(y)^\prime \in B\setminus A.
\end{array} \right.
\end{equation}
  
Then $T$ is a continuous bijection $X\to X$. To finish the argument assume  $(X,T)$ is  conjugate to $Y\times\{ 1,\cdots ,k\}$. But then $X_1$ should be homeomorphic to $Y_1\times\{ 1,\cdots , k\}$, which is impossible since the first space is connected while the second one has  $k$ disjoint connected components.

In the general case when $C$ or $D$ is not a singleton, the argument is similar, we only take $t(v)={\rm dist}(C,v)/{\rm dist}(C,D)$ for $v\in Y_1\setminus A$.
\cbd
\end{proof}
  
\begin{remark} In \cite{F} there is an example of a  minimal weak mixing $(Y,S)$, where $Y$ is the 5th-dimensional torus. By the previous lemma, even in this case, there is a minimal $k$-extension $(X,T)$ of $(Y,S)$ which is not conjugate to a skew-product system.  
\end{remark}

\begin{lemma}
\label{decom} Let $(X,\rho)$ be a compact metric space such that every connected component of $X$ is nowhere dense in $X$. Then, for every $\delta >0$ there is a finite decomposition $X_1\cup X_2\cup\cdots\cup X_m$ of $X$ into disjoint compact subsets such that, for every $j$, $X_j$ is a subset of the $\delta$-neighborhood of a connected component of $X$.
 \end{lemma}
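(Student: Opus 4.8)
The plan is to reduce everything to the classical fact that in a compact metric (hence Hausdorff) space the connected component $C_x$ of a point $x$ coincides with its \emph{quasi-component}, that is, with the intersection $\bigcap_\alpha V_\alpha$ of all clopen sets $V_\alpha$ containing $x$. Writing $N_\delta(C_x)=\{z\in X:\rho(z,C_x)<\delta\}$ for the open $\delta$-neighborhood of $C_x$, the crux of the argument is to squeeze a single \emph{clopen} set between $C_x$ and $N_\delta(C_x)$.

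First I would fix $x\in X$. By the component = quasi-component theorem we have $C_x=\bigcap_\alpha V_\alpha$ with each $V_\alpha$ clopen and $x\in V_\alpha$. The set $K=X\setminus N_\delta(C_x)$ is closed, hence compact, and $C_x\cap K=\emptyset$; therefore the closed subsets $\{V_\alpha\cap K\}_\alpha$ of the compact set $K$ have empty intersection. By the finite intersection property there are indices with $V_{\alpha_1}\cap\cdots\cap V_{\alpha_n}\cap K=\emptyset$. Setting $U_x=V_{\alpha_1}\cap\cdots\cap V_{\alpha_n}$ gives a clopen set with $x\in C_x\subseteq U_x\subseteq N_\delta(C_x)$.

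Next, the family $\{U_x:x\in X\}$ is an open cover of the compact space $X$, so it admits a finite subcover $U_{x_1},\dots,U_{x_k}$. To make the pieces disjoint I would disjointify in the standard way, putting $X_1=U_{x_1}$ and $X_j=U_{x_j}\setminus(U_{x_1}\cup\cdots\cup U_{x_{j-1}})$ for $j\ge 2$; after discarding empty sets and relabelling this yields the required decomposition. Each $X_j$ is a finite Boolean combination of clopen sets, hence clopen and in particular compact; the $X_j$ are pairwise disjoint with union $X$; and $X_j\subseteq U_{x_j}\subseteq N_\delta(C_{x_j})$ sits inside the $\delta$-neighborhood of the component $C_{x_j}$, as required.

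The hard part is the squeeze in the second paragraph: it is precisely here that the structure of a compact metric space is used, via the identification of $C_x$ with the intersection of its clopen neighborhoods. Once that is in place, the finiteness of the decomposition is merely compactness of $X$ and the disjointification is routine. I would note that the construction above does not actually invoke the hypothesis that every component is nowhere dense; its role is only to guarantee that the resulting decomposition properly refines the trivial one (for a connected $X$ the sole admissible decomposition would be $X_1=X$).
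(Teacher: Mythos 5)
Your proof is correct, but it takes a genuinely different and leaner route than the paper's. Your key step --- squeezing a single clopen set $U_x$ with $C_x\subseteq U_x\subseteq N_\delta(C_x)$ out of the classical identity of components with quasi-components in compact Hausdorff spaces, via compactness of $X\setminus N_\delta(C_x)$ and the finite intersection property --- is sound, and the disjointification of a finite subcover into clopen (hence compact) pieces is routine and closes the argument. The paper proceeds with a different architecture: it first isolates the set $X_\delta$ of points lying in components of diameter $\ge\delta$, proves $X_\delta$ is closed by a Hausdorff-metric limit argument on a sequence of components, then builds a clopen neighborhood $U(K)$ inside the $\delta$-neighborhood of each such large component (using, in ad hoc form, exactly the clopen-separation fact you quote: a point outside a component can be split from it by a decomposition of $X$ into two disjoint compact sets), covers $X_\delta$ by finitely many disjoint such sets, and finally splits the clopen remainder into finitely many disjoint compact pieces of diameter $\le\delta$. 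That architecture buys a slightly stronger conclusion --- the pieces away from the large components have diameter at most $\delta$, not merely membership in the $\delta$-neighborhood of some component --- at the price of the Hausdorff-metric argument and two final steps (disjointness of the cover of $X_\delta$, and the chopping of the remainder) which the paper only sketches and which in the end require your uniform squeeze-and-disjointify mechanism anyway. Your closing observation is also accurate: the nowhere-density hypothesis is never used, and the lemma as stated holds in any compact metric space (for connected $X$ take $X_1=X$); the hypothesis earns its keep only in the application in Lemma \ref{con}, where the decomposition must genuinely refine the space.
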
 
 
\begin{proof} 
Let $X_\delta$ be the union of connected components of $X$ with diameter $\ge\delta$. Then $X_\delta$ is a closed set. Indeed, let $x_n\in X_\delta$ be such that $\lim _{n\to\infty}x_n=x$. Then there are connected components $K_n\subseteq X_\delta$ such that $x_n\in K_n$, for every $n$. Since the set of nonempty compact subsets of $X$, with the Hausdorff metric $\rho _H$, is a compact set, we may assume that there is a compact set $K\subset X$ such that $\lim _{n\to\infty}\rho _H(K_n,K)=0$.
Since $x\in K$, it suffices to show that $K$ is a connected component of $X$ with $K\subset X_\delta$. Obviously, ${\rm diam} (K) \ge\delta$. To show that $K$ is connected, assume the contrary. Then there are disjoint closed sets $G,H$  with $K=G\cup H$ such that $K\cap G\ne\emptyset\ne H\cap K$. Let  $G^\prime$, $H^\prime$ be disjoint closed neighborhoods of $G$ and $H$, respectively. Then, for every sufficiently large $n$, $K_n\subseteq G^\prime\cup H^\prime$, $K_n\cap G^\prime\ne\emptyset\ne K_n\cap H^\prime$ which is a contradiction.  Thus, $X_\delta$ is a closed set.

Next we show that for every  connected component $K\subset X_\delta$ there is a compact neighborhood $U(K)=U$ of $K$ such that $X\setminus U$ is  compact, and $U$ is contained in the open $\delta$-neighborhood $V$ of  $K$. Since $K$ is a component, for every $x\in X\setminus V$ there is a decomposition of $X$ into disjoint compact sets $G_x, H_x$ such that $G_x$ is a neighborhood of $x$ and $K\subseteq H_x$. Since $X\setminus V$ is compact, there is a finite cover $G_{x_1}\cup G_{x_2}\cup\cdots\cup G_{x_k}=X\setminus V$. Take $U (K)=H_{x_1}\cap\cdots\cap H_{x_k}$.

To finish the proof it suffices to take a finite cover $W=W_1\cup W_2\cup\cdots\cup W_s$ of $X_\delta$ consisting of disjoint compact sets such that every $W_j$ is a set $U(K_j)$ with $K_j\subseteq X_\delta$ a connected component. Then $X\setminus W$ is a compact set which can be divided into finitely disjoint compact sets with diam $\le\delta$.
\cbd
\end{proof}

 We say that a set $A$ in a metric space is {\it $\delta$-separated} if $\rho (u,v)\ge \delta$ for every distinct $u,v\in A$.

\begin{lemma}
\label{sep} Let $X,Y$ be compact metric spaces, $n>0$ an integer, and $\pi :X\to Y$ a continuous map such that, for every $y\in Y$, $\#\pi ^{-1}(y)=n$. Assume that $(Y,S)$ is a minimal system. Then there is a $\delta _0>0$ such that every set $\pi ^{-1}(y)$ is $\delta _0$-separated.
 \end{lemma}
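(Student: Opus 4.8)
The plan is to argue by contradiction using compactness. Suppose no such $\delta_0$ exists. Then for every $\delta>0$ there is a point $y\in Y$ whose fibre $\pi^{-1}(y)$ fails to be $\delta$-separated, i.e. contains two distinct points within distance $<\delta$. Choosing $\delta=1/j$ for $j\in\mathbb N$, I would extract a sequence $y_j\in Y$ and, for each $j$, a pair of distinct points $u_j,v_j\in\pi^{-1}(y_j)$ with $\rho(u_j,v_j)<1/j$. By compactness of $X$ (and of $Y$) I pass to a subsequence so that $u_j\to u$ and $v_j\to v$; since $\rho(u_j,v_j)\to 0$ we get $u=v$, and by continuity of $\pi$ both $u_j$ and $v_j$ map into the same limit fibre, so $\pi(u)=\lim\pi(u_j)=\lim y_j=y_0$ for some $y_0\in Y$ and $u\in\pi^{-1}(y_0)$.

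The heart of the argument is to convert this collapse of two fibre points into a fibre of cardinality strictly less than $n$ in the limit, contradicting the hypothesis $\#\pi^{-1}(y)=n$ for all $y$. Concretely, I would fix the limit fibre $\pi^{-1}(y_0)=\{z_1,\dots,z_n\}$, which by assumption has exactly $n$ points, hence is genuinely $\delta_1$-separated for some $\delta_1>0$. For $j$ large the points $u_j$ and $v_j$ both lie within $\delta_1/3$ of $u=v$, so they are ``captured'' by a single one of the $z_i$; meanwhile the remaining points of each fibre $\pi^{-1}(y_j)$ must accumulate on the $z_i$ as well. A counting argument then shows that, for large $j$, the fibre $\pi^{-1}(y_j)$ cannot supply $n$ distinct points distributed one-per-$z_i$, since two of its points ($u_j,v_j$) are forced into the same small ball. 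The clean way to phrase this is via upper semicontinuity of the fibre cardinality: for a finite-to-one map into a metric space, the map $y\mapsto\#\pi^{-1}(y)$ is lower semicontinuous on the set where fibres are separated, but the collapsing pair forces $\limsup_j\#\pi^{-1}(y_j)\le n-1$ in the relevant local sense, which is the contradiction.

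The step I expect to be the main obstacle is making the capture-and-count argument rigorous without over-using minimality — indeed minimality of $(Y,S)$ is listed as a hypothesis but the separation statement is purely topological and should not genuinely require dynamics. I would therefore be careful to isolate exactly what is needed: the constancy $\#\pi^{-1}(y)\equiv n$ together with compactness and continuity. The subtle point is that a priori the two limit points $u,v$ could be distinct points of the limit fibre rather than coinciding; the estimate $\rho(u_j,v_j)<1/j$ is precisely what rules this out and forces $u=v$, and it is this forced coincidence, set against the exact cardinality $n$ of every fibre, that produces the contradiction. I would also guard against the possibility that the $z_i$ themselves vary discontinuously, which is why selecting a convergent subsequence of the \emph{whole} fibre $\pi^{-1}(y_j)$ (using compactness of the hyperspace, as already exploited in Lemma \ref{decom}) is the cleanest bookkeeping device.
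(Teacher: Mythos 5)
There is a genuine gap, and it sits precisely at the point you flagged and then dismissed: your assertion that minimality of $(Y,S)$ is not genuinely needed and that the statement is purely topological. It is not. Consider $Y=[0,1]$ and $X=\bigl([0,1]\times\{0\}\bigr)\cup\{(t,t):t\in[0,1]\}\cup\{(0,1)\}$ with $\pi$ the first-coordinate projection: every fibre has exactly $n=2$ points, since $\pi^{-1}(0)=\{(0,0),(0,1)\}$ and $\pi^{-1}(y)=\{(y,0),(y,y)\}$ for $y>0$, yet the fibre over $y$ is only $y$-separated, so no $\delta_0$ exists. This kills the capture-and-count step of your plan: the Hausdorff limit of the fibres $\pi^{-1}(y_j)$ is only \emph{contained} in $\pi^{-1}(y_0)$ (fibres vary upper, not lower, semicontinuously), and it can simultaneously collapse your pair $u_j,v_j$ to a single point and miss other points $z_i$ of the limit fibre altogether --- in the example $\pi^{-1}(1/j)\to\{(0,0)\}$, which omits $(0,1)$. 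Nothing distributes the $n$ points of $\pi^{-1}(y_j)$ one per $z_i$, so the exact count $\#\pi^{-1}(y_j)=n$ produces no contradiction, and your appeal to lower semicontinuity of the fibre cardinality is exactly the false step.

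The paper's proof uses your hyperspace-compactness device, but only where it is valid: on the set $Y_\delta$ of points $y$ whose fibre is $\delta$-separated, a Hausdorff limit $A$ of $\delta$-separated $n$-point fibres still has exactly $n$ points (the separation forbids collapsing), and since $A\subseteq\pi^{-1}(y_0)$ with $\#\pi^{-1}(y_0)=n$, in fact $A=\pi^{-1}(y_0)$; hence each $Y_\delta$ is closed. Finiteness of fibres gives $Y=\bigcup_{j>0}Y_{1/j}$, the Baire category theorem gives some $Y_{1/k}$ with nonempty interior, and only now does the dynamics enter: minimality yields $Y=\bigcup_{0\le j\le m}S^{-j}(Y_{1/k})$ for some finite $m$, and continuity along these finitely many iterates transfers the $1/k$-separation back to a uniform $\delta_0$ on all of $Y$. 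Minimality is thus the mechanism that upgrades separation on a set with nonempty interior to separation everywhere; your outline contains no substitute for it, and the example above shows none can exist under purely topological hypotheses (note, consistently with the lemma, that $[0,1]$ carries no minimal system).
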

 
\begin{proof} 
For $\delta >0$ let $Y_\delta$ be the set of $y\in Y$ such that $\pi ^{-1}(y)$ is $\delta$-separated. Then $Y_\delta$ is a compact set. Indeed, let $y_j\in Y_\delta$ such that $\lim _{j\to\infty} y_j=y_0$. Since the space of nonempty compact subsets  of $X$, equipped with the Hausdorff metric $\rho _H$, is  a compact space there is a subsequence $j_1<j_2<\cdots$ and a set $A\subset X$ such that $\lim _{k\to\infty}\rho _H(\pi ^{-1}(y_{j_k}), A)=0$. By the continuity, $\pi (A)=y_0$  and $\# A=n$. Hence $A=\pi ^{-1}(y_0)$ is $\delta$-separated, i.e., $y_0\in Y_\delta$. Since every $\pi^{-1}(y)$ is finite, $\bigcup _{j>0} Y_{1/j}=Y$.
By the Baire category theorem there is a $k>0$ such that $Y_{1/k}$ has nonempty interior. Since $Y$ is minimal, there is an $m>0$ such that $\bigcup _{0\le j\le m}S^{-j}(Y_{1/k})=Y$. By the continuity of $S$ there is a $\delta _0>0$ such that $S^{-j}(Y_{1/k})\subset Y_{\delta _0}$, $0\le j\le m$. Consequently, $Y=Y_{\delta _0}$. 
\cbd
\end{proof}

\begin{lemma}
\label{con}Let $(Y,S)$ be a factor of $(X,T)$, with factor map $\pi :X\to Y$. Assume that $(Y,S)$ is minimal, weak mixing, not connected, and  such that every subcontinuum of $Y$ is unicoherent. Finally, let $n>0$ be an integer such that, for every $y\in Y$, $\#\pi ^{-1}(y)=n$. Then $(X,T)$ is conjugate to a skew-product map $F: Y\times N\to Y\times N$, where $N=\{ 1,2,\cdots , n\}$. 
\end{lemma}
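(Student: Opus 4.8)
The whole statement reduces to producing a continuous \emph{labelling} $\ell:X\to N$ that is a bijection on every fibre $\pi^{-1}(y)$; equivalently, to splitting $X$ into $n$ pairwise disjoint clopen sets $X_1,\dots,X_n$ with each $\pi|_{X_i}:X_i\to Y$ a homeomorphism (clopen because $\ell$ is continuous and $N$ is discrete, and a homeomorphism because it is a continuous bijection of compacta). Granting such an $\ell$, set $\Phi(x)=(\pi(x),\ell(x))$. Then $\Phi:X\to Y\times N$ is a continuous bijection, hence a homeomorphism, and $p_1\circ\Phi=\pi$, where $p_1$ is the projection onto $Y$. Consequently $F:=\Phi\circ T\circ\Phi^{-1}$ satisfies $p_1\circ F(y,i)=\pi\circ T\circ\Phi^{-1}(y,i)=S\circ\pi\circ\Phi^{-1}(y,i)=S(y)$, so $F(y,i)=(S(y),\sigma_y(i))$ for a uniquely determined $\sigma_y:N\to N$; being the transport of the continuous map $T$, $F$ is a continuous skew-product map. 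Thus everything rests on constructing $\ell$.

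First I would show that $\pi$ is a \emph{covering map} of degree $n$. By Lemma \ref{sep} there is a $\delta_0>0$ with every fibre $\pi^{-1}(y)$ being $\delta_0$-separated. Fixing $y$ and writing $\pi^{-1}(y)=\{x_1,\dots,x_n\}$, continuity of $\pi$ and compactness give a neighbourhood $V$ of $y$ and disjoint open sets $W_1,\dots,W_n$ (of diameter $<\delta_0/2$, with $x_k\in W_k$) such that $\pi^{-1}(V)=\bigcup_k (W_k\cap\pi^{-1}(V))$ and each $\pi|_{W_k\cap\pi^{-1}(V)}:W_k\cap\pi^{-1}(V)\to V$ is a homeomorphism; here one uses that each fibre has \emph{exactly} $n$ points, so no sheets may merge or disappear. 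A Lebesgue-number argument then furnishes a single scale $\varepsilon>0$ such that every subset of $Y$ of diameter $<\varepsilon$ is evenly covered.

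Next I would trivialise $\pi$ over each connected component of $Y$. Let $K$ be such a component; it is a subcontinuum of $Y$, hence unicoherent, and every subcontinuum of $K$ is a subcontinuum of $Y$, so $K$ is \emph{hereditarily} unicoherent and in particular contains no simple closed curve. Cover $K$ by finitely many connected, evenly covered closed pieces $P_1,\dots,P_r$ of diameter $<\varepsilon$; over each $P_j$ the $n$ sheets are canonically labelled. The labellings over $P_j$ and over $P_{j'}$ must be reconciled over $P_j\cap P_{j'}$, and likewise over the successive intersections arising as one builds $K$ up from the $P_j$'s. This is exactly where hereditary unicoherence is used: whenever two already trivialised subcontinua are glued along their intersection, that intersection is \emph{connected}, so two sheets agreeing at one of its points agree on all of it (the agreement set is clopen and nonempty in a connected set). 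Iterating, no monodromy can arise, and the local labellings patch to $n$ continuous sections $K\to X$ whose images are the $n$ connected components of $\pi^{-1}(K)$, each mapped homeomorphically onto $K$. (This is precisely what fails in Lemma \ref{ex} and the subsequent Remark, where $Y_1$ is the circle: the non-unicoherence of $S^1$ forces a genuine twisting of the sheets.) I expect this monodromy-vanishing step to be the main obstacle of the proof.

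It remains to label the sheets \emph{coherently across components}. Passing to the component spaces $\widehat Y$, $\widehat X$, which are compact and totally disconnected as quotients by connected components, the previous step shows that $\pi$ induces an $n$-to-one covering $\widehat\pi:\widehat X\to\widehat Y$ of totally disconnected compacta. Any finite covering of such a base is trivial: in the generic case the components of $Y$ are nowhere dense, and Lemma \ref{decom} decomposes $Y$ into finitely many disjoint clopen pieces (each a union of whole components) that descend to a clopen partition of $\widehat Y$ into evenly covered sets; in the degenerate case some component is clopen, whence $Y$ is a finite disjoint union of clopen continua and the partition is immediate. On each such clopen piece the $n$ sheets may be labelled $1,\dots,n$ arbitrarily, the choices being automatically consistent because the pieces are disjoint and clopen. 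This yields a continuous $\widehat\ell:\widehat X\to N$ bijective on every fibre of $\widehat\pi$. Pulling $\widehat\ell$ back gives a continuous $\ell:X\to N$, constant on each component of $X$; since over a component $K$ of $Y$ there are exactly $n$ preimage-components, each meeting every fibre $\pi^{-1}(y)$ with $y\in K$ in exactly one point, $\ell$ is a bijection on each fibre. This is the required labelling, and the proof concludes as in the first paragraph.
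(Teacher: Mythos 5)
Your overall strategy --- reduce everything to a continuous labelling $\ell:X\to N$, derive local triviality of $\pi$ from the uniform fibre separation of Lemma \ref{sep}, trivialise over each component using hereditary unicoherence to rule out monodromy, and patch across components via the decomposition of Lemma \ref{decom} --- is exactly the paper's strategy, and much of it is carried out more carefully than in the paper itself. But one step genuinely fails: ``Cover $K$ by finitely many connected, evenly covered closed pieces $P_1,\dots,P_r$ of diameter $<\varepsilon$.'' Such a cover exists only if $K$ is locally connected (has small connected pieces at every scale), and local connectedness is nowhere among the hypotheses. Hereditarily unicoherent continua can be indecomposable --- for instance the Knaster bucket-handle or the pseudo-arc, both chainable and hence hereditarily unicoherent --- and in an indecomposable continuum every proper subcontinuum is nowhere dense, so by the Baire category theorem no finite (indeed no countable) family of subcontinua of diameter $<\varepsilon<\mathrm{diam}(K)$ can cover $K$. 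Nothing in the assumptions on $(Y,S)$ excludes such components, so your trivialisation breaks down precisely on the class of continua the unicoherence hypothesis is designed to accommodate; it is not a removable convenience but the crux of the lemma.

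The paper's proof avoids this step: instead of covering $P_k$ by small subcontinua, it fixes $p_k\in P_k$, takes compact $\eta$-neighbourhoods $U_0\subset U_1\subset\cdots$ (each the $\eta$-neighbourhood of the previous one) with $\eta<\delta_0/3$, and extends the trivialisation of $\pi^{-1}(U_0\cap P_k)$ stepwise over $\pi^{-1}(U_j\cap P_k)$. The sets $U_j\cap P_k$ may be badly disconnected, but that is harmless: a continuum is chain-connected, so every point of $P_k$ is reached from $p_k$ by an $\eta$-chain inside $P_k$, labels transport along chains because consecutive links lie within a single trivialising scale, and compactness bounds the number of steps. Hereditary unicoherence is then invoked only for uniqueness --- independence of the transported label from the chosen chain --- which is the same role your connected-intersection gluing plays, but without requiring small connected pieces to exist. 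To repair your argument, replace the finite cover of $K$ by this chain-extension (or prove directly that a section of the covering over a hereditarily unicoherent continuum is determined by, and extendable from, its value at a single point); your reduction to the labelling, the even-covering argument, and the cross-component patching can then be kept essentially verbatim (though note that the pieces produced by Lemma \ref{decom} near a large component are not of small diameter, so their even coveredness for $\widehat\pi$ needs an extra compactness argument --- the paper handles this by the choice $\eta<\delta_0/3$ when extending $\psi_k$ from $\pi^{-1}(P_k)$ to $\pi^{-1}(Y_k)$).
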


\begin{proof} 
By Lemma \ref{sep} there is a $\delta _0>0$ such that $\pi ^{-1}(y)$ is $\delta _0$-separated, for every $y\in Y$. Let $0<\eta <\delta _0/3$ be such that, for every $u,v\in X$, $\rho (u,v)<2\eta$ implies $\rho (T(u),T(v))<\delta _0/3$.  
 Since $(Y, S)$ is weak mixing and not connected every connected component of $(Y,\rho )$ is  nowhere dense. By Lemma \ref{decom}, there is a finite decomposition $Y_1\cup Y_2\cup\cdots \cup Y_m$ of $Y$ into disjoint compact sets such that  $Y_j$ is contained in the $\eta$-neighborhood of a connected component $P_k$ of $Y$. Let $p_k\in P_k$ and let $U_0$ be a compact $\eta$-neighborhood of $p_k$.
 Define a continuous map $\psi _k:\pi^{-1}(U_0\cap P_k)\to (U_0\cap P_k)\times N$. If $Y_k\subset U_0$ extend $\psi _k$ continuously onto $\pi ^{-1}(Y_k)$; by the choice of $\eta$ this extension is uniquely determined by $\psi _k$ restricted to $\pi ^{-1}(U_0\cap P_k)$. Otherwise take $U_1$ the compact $\eta$-neighborhood of $U_0$ and extend $\psi _k$ continuously to a map $\pi ^{-1}(U_1\cap P_k)\to (U_1\cap P_k)\times N$, etc. Since $P_k$ is compact  after finite number of steps $\psi _k$ is continuously extended onto $\pi ^{-1}(P_k)$ such that $\psi _k(\pi^{-1}(P_k))=P_k\times N$. Since $ P_k$ is unicoherent continuum, this extension is uniquely determined by $\psi _k$ on $\pi ^{-1}(U_0\cap P_k)$. Finally, by the choice of $\eta$, $\psi _k$ can be continuously (and uniquely) extended onto $\piÊ^{-1}(Y_k)$.  To finish the argument take $\psi =\psi _1\cup\cdots \cup \psi _m$ which is a continuous bijective map $X\to Y\times N$, and take $F=\psi\circ T\circ\psi ^{-1}$.
\cbd
\end{proof}

\begin{theorem}
\label{main3} Let $(Y,S)$ be  minimal, weak mixing, not connected, and  such that every subcontinuum of $Y$ is unicoherent. Let $n>0$ be an integer, and let $(X,T)$ be an extension of $(Y,S)$ such that $\#\pi ^{-1}(y)=n$ for every $y\in Y$. Finally, let $M\subseteq X$ be a minimal set.  Then $(M,T|_M)$ is LYS. 
\end{theorem}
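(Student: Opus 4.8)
The plan is to reduce this theorem to the already-established Theorem \ref{main1} by exhibiting the extension $(X,T)$ as a skew-product system and then restricting attention to the minimal set $M$. The key structural input is Lemma \ref{con}, whose hypotheses match those of Theorem \ref{main3} exactly: $(Y,S)$ is minimal, weak mixing, not connected, with every subcontinuum unicoherent, and $\pi^{-1}(y)$ has constant cardinality $n$. Thus Lemma \ref{con} immediately yields that $(X,T)$ is conjugate to a skew-product map $F\colon Y\times N\to Y\times N$ with $N=\{1,2,\dots,n\}$.

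First I would apply Lemma \ref{con} to fix a conjugacy $\psi\colon X\to Y\times N$ with $F=\psi\circ T\circ\psi^{-1}$. Since Li-Yorke sensitivity is a conjugacy invariant (the defining condition \eqref{e:lys} is preserved under the uniformly continuous conjugacy $\psi$ on the compact space $X$, up to adjusting $\varepsilon$ by the modulus of continuity), it suffices to prove that $(\psi(M), F|_{\psi(M)})$ is $LYS$. Now $\psi(M)\subseteq Y\times N$ is a minimal set of the skew-product $F$, because $\psi$ is a conjugacy and $M$ is minimal for $T$. At this point the setting is precisely that of Theorem \ref{main1}: a minimal weak mixing base $(Y,S)$, a finite fibre $N$, a skew-product map $F$, and a minimal set $\psi(M)$ inside $Y\times N$.

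Therefore I would invoke Theorem \ref{main1} directly with the base system $(Y,S)$ playing the role of $(X,T)$ there, the finite set $N$ playing the role of $A$, and $\psi(M)$ playing the role of the minimal set $M\subseteq X\times A$. Theorem \ref{main1} guarantees that $(\psi(M), F|_{\psi(M)})$ is $LYS_\varepsilon$ for some $\varepsilon>0$. Pulling this back through the conjugacy $\psi^{-1}$ gives that $(M, T|_M)$ is $LYS$, which is the assertion of the theorem.

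The main obstacle, and essentially the only nontrivial point, is verifying that Lemma \ref{con} genuinely applies and that conjugacy transfers $LYS$ cleanly. The hypotheses of Theorem \ref{main3} were evidently chosen to align with Lemma \ref{con}, so the real content has been absorbed into that lemma (the delicate construction of the trivializing homeomorphism $\psi$ using unicoherence and the decomposition lemmas). The conjugacy-invariance of $LYS$ deserves a sentence of care: one must check that the proximal-but-not-$\varepsilon$-asymptotic condition survives under $\psi$, which follows from uniform continuity of both $\psi$ and $\psi^{-1}$ on the compact metric spaces involved, at the cost of replacing $\varepsilon$ by a possibly smaller constant. Once these two observations are in place the proof is a one-line composition of Lemma \ref{con}, Theorem \ref{main1}, and the invariance of $LYS$ under topological conjugacy.
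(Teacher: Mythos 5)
Your proposal matches the paper's proof exactly: the paper disposes of Theorem \ref{main3} in one line by combining Lemma \ref{con} (to realize $(X,T)$ as a skew-product over $Y\times\{1,\dots,n\}$) with Theorem \ref{main1}. Your additional remark that Li-Yorke sensitivity is preserved under topological conjugacy (with $\varepsilon$ adjusted by the modulus of continuity of $\psi^{-1}$) is a correct and worthwhile detail that the paper leaves implicit.
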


\begin{proof} It follows by Lemma \ref{con} and Theorem \ref{main1}.
\cbd
\end{proof}

\section{Li-Yorke sensitivity and Li-Yorke chaos}

In \cite{AK} there is a problem whether {\it LYS} implies {\it LYC}; the converse implication obviously is not true. Here we show that under some additional conditions, the answer is positive. This significantly restricts the class of systems $(X,T)$ for which the implication need not hold. To simplify the argument, we will use the following notation. Given a system $(X,T)$ denote by $Dist$ the set of distal pairs $(x,y)\in X\times X$, and by $Asym _\varepsilon$ the set of $\varepsilon$-asymptotic pairs $(x,y)$ in $X\times X$.\\

\begin{theorem}
\label{main4} Let $(X, T)$ be $LYS$. Assume there is a non-empty open set $H\subset X$ such that $(H\times H) \cap Dist $ has empty interior or (equivalently) that $(H\times H)\cap Dist $ is a set of the first Baire category. Then there is an $\varepsilon >0$ such that $(X,T)$ is $LYC_\varepsilon$.
\end{theorem}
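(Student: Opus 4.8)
The plan is to realise the set of $\varepsilon$-Li-Yorke pairs as a comeager subset of $H\times H$ and then extract an uncountable scrambled set from it by a Fubini-plus-Mycielski argument. Write $R_\varepsilon\subseteq X\times X$ for the set of pairs satisfying (\ref{e:lys}), the $\varepsilon$-Li-Yorke pairs. The starting observation is that a pair is proximal exactly when it is not distal, and fails to be $\varepsilon$-asymptotic exactly when $\limsup_n\rho(T^n x,T^n y)>\varepsilon$, so that
\[
R_\varepsilon=(X\times X)\setminus\bigl(Dist\cup Asym_\varepsilon\bigr).
\]
I would also record that $Dist$ is an $F_\sigma$ set, since its complement (the proximal pairs) is the $G_\delta$ set $\bigcap_k\bigcup_n\{\rho(T^n x,T^n y)<1/k\}$. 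A relatively $F_\sigma$ subset of the open set $H\times H$ has empty interior if and only if it is of the first category; this is exactly the equivalence asserted in the hypothesis, and it lets me work with the first-category formulation throughout.

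First I would fix $\varepsilon$. By Lemma \ref{asym} there is a $\delta>0$ for which $Asym_\delta$ is of the first category in $X\times X$; put $\varepsilon:=\delta$. Since $H\times H$ is open, $Asym_\delta\cap(H\times H)$ is then of the first category in $H\times H$, and by hypothesis so is $Dist\cap(H\times H)$. Hence $R_\delta\cap(H\times H)$, being the complement in $H\times H$ of the union of these two meager sets, is comeager in $H\times H$ and therefore contains a dense $G_\delta$ subset of $H\times H$. I would also note that $H$, as an open subset of the compact metric space $X$, is a separable completely metrizable space, and that it has no isolated points: LYS forces every point of $X$ to be a limit of points distinct from it (each forming a non-$\varepsilon$-asymptotic pair with it), so $X$, and with it the open set $H$, is perfect.

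The remaining work turns the comeager relation into a scrambled set. Applying the topological Fubini theorem (Lemma \ref{FT}) to the Polish space $H$ and the relation $R_\delta\cap(H\times H)$ produces a dense $G_\delta$ set $A\subseteq H$ such that for every $x\in A$ the fibre $R_\delta(x)$ contains a dense $G_\delta$ set $X_x\subseteq H$. I would then restrict everything to $A$: the set $A$ is again separable, completely metrizable and perfect; the relation $R_\delta\cap(A\times A)$ is symmetric because $\rho(T^n x,T^n y)$ is symmetric in $x,y$; and for each $x\in A$ the fibre $R_\delta(x)\cap A\supseteq X_x\cap A$ is comeager in $A$ (the complement of $X_x$ is meager in $H$, hence meager in the comeager subset $A$), so it contains a dense $G_\delta$ subset of $A$. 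Thus Lemma \ref{uncount}, applied on the space $A$ to the relation $R_\delta\cap(A\times A)$, yields a dense uncountable set $D\subseteq A$ with $D\times D\setminus\Delta\subseteq R_\delta$; that is, any two distinct points of $D$ form a $\delta$-Li-Yorke pair, so $D$ is an uncountable $\delta$-scrambled set and $(X,T)$ is $LYC_\delta$.

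I expect the main obstacle to be the bookkeeping in the last paragraph rather than any single hard idea: one must verify that density, the $G_\delta$ property and perfectness all survive the two successive passages to non-closed subspaces ($H\subseteq X$ and then $A\subseteq H$), and in particular that ``meager in $H$'' transfers to ``meager in $A$'' for the exceptional fibres, so that the per-fibre hypothesis of Lemma \ref{uncount} is genuinely met for \emph{every} $x\in A$ and not merely for a generic one. The two places where the specific hypotheses enter are the openness of $H$ (used to transport the global first-category conclusion of Lemma \ref{asym} into $H\times H$) and the $F_\sigma$ nature of $Dist$ (used for the stated equivalence).
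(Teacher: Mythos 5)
Your proposal is correct and takes essentially the same route as the paper's own proof: fix $\varepsilon$ via Lemma \ref{asym}, realise the $\varepsilon$-Li-Yorke pairs as the complement of $Dist \cup Asym_\varepsilon$, residual in $H\times H$, and then combine Lemmas \ref{FT} and \ref{uncount} to extract an uncountable dense scrambled set inside $H$. You merely spell out the bookkeeping (the passage to the Polish, perfect subspaces $H$ and then $A$, and the transfer of meagerness to these subspaces) that the paper compresses into its single sentence invoking Lemmas \ref{FT} and \ref{uncount}.
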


\begin{proof}   
By Lemma \ref{asym} there is an $\varepsilon >0$ such that $T$ is $LYS_\varepsilon$, and $Asym _\varepsilon$ is a first category set. It is easy to see that $Dist $ and $Asym _{\varepsilon}$ are  $F_\sigma$ sets hence, by the Baire category theorem,  $(H\times H)\cap Dist $ is of the first category if and only if it has the empty interior. Assume $(H\times H)\cap Dist$ is a first category set and put
$L=X\times X\setminus (Dist \cup Asym _\varepsilon )$. Then $L$ is a $G_\delta$ set dense in $H\times H$. By Lemmas \ref{FT} and \ref{uncount}, there is an uncountable set $D\subset H$ such that $D\times D\setminus\Delta\subset L$. Obviously, $L$ is the set of $\varepsilon$-Li-Yorke pairs in $X\times X$. Hence, $D$ is an $\varepsilon$-scrambled set for $(X,T)$ and hence, $T$ is $LYC_\varepsilon$.
\cbd
\end{proof}

\begin{remark} 
For a minimal  $(X,T)$ which is both $LYS$ and $LYC$, the set $Dist $ can be very large. In \cite{M1} there is an example of such a system, even without a weak mixing factor such that the set $Dist $ contains an open dense subset of $X\times X$. 
\end{remark}

\section*{Acknowledgements}

The author would like to express her thanks to Zuzana Roth and Samuel Roth for their essential conception. They proved Theorem \ref{lys} for a set $A$  containing exactly two points, which contributed to prove of Theorem~\ref{lys}. The author also sincerely thanks Professor Jaroslav Sm{\' i}tal for his very kind suggestions and excellent support.


\bibliographystyle{elsarticle-num}
\bibliography{<your-bib-database>}

\begin{thebibliography}{00}


\bibitem{A} E. Akin, The general Topology of Dynamical Systems, American Mathematical Society, Providence, RI, 1993.

\bibitem{AK} E. Akin, S. Kolyada,  Li-Yorke sensitivity, Nonlinearity 16  (2003) 1421 -- 1433.

\bibitem{B} F. Blanchard, B. Host, A. Maass, Topological complexity, Ergod. Th. Dynam. Sys. 20 (2000) 641 -- 662.
 
 \bibitem{BK} L. Block, J. Keesling, A characterization of adding machine maps, Topology Appl. 140 (2004) 151 -- 161.
 
 \bibitem{M1} M. \v Ciklov\'a, Li-Yorke sensitive minimal maps, Nonlinearity 19 (2006) 517 -- 529.
 
 \bibitem{M2} M. \v Ciklov\'a-Ml\'{\i}chov\'a, Li-Yorke sensitive minimal maps II, Nonlinearity 22 (2009) 1569 -- 1573.

 \bibitem{F} B.R. Fayad, Topologically mixing and minimal but not ergodic, analytic transformation on $\mathbb T^5$, Bol. Soc. Bras. Mat. 31 (2000) 277 -- 285.
 
 \bibitem{HY1} W.  Huang, X. Ye, Devaney's chaos or 2-scattering implies Li-Yorke chaos, Topology Appl. 117 (2002) 259 -- 272.

 \bibitem{KST} S.  Kolyada, \v L. Snoha, S. Trofimchuk, Noninvertible minimal maps, Fund. Math. 168 (2001) 141 -- 163.

\bibitem{Kur} C. Kuratowski, Topologie, vol. II, Polish Scientific Publishers, Warsaw, 1961.

\end{thebibliography}



\end{document}